\DeclareMathOperator{\sSet}{\mathcal{S}et_{\Delta}} 
\DeclareMathOperator{\cA}{\mathcal{A}}  
\DeclareMathOperator{\sk}{\mathrm{sk}}
\theoremstyle{plain}
\newtheorem{thm}{Theorem}[section] 
\newtheorem{theorem}{Theorem}[section]
\newtheorem{lemma}[theorem]{Lemma}
\newtheorem{proposition}[theorem]{Proposition}
\newtheorem{corollary}[theorem]{Corollary}
\begin{document} 

\title[Stability for inner fibrations revisited]{Stability for inner fibrations revisited}
  \author[D.\ Stevenson]{Danny Stevenson}
  \address[Danny Stevenson]
  {School of Mathematical Sciences\\
  University of Adelaide\\
  Adelaide, SA 5005 \\
  Australia}
  \email{daniel.stevenson@adelaide.edu.au}

\subjclass[2010]{55U10, 55U35, 18G30}

\begin{abstract}
In this paper we prove a stability result for inner fibrations in terms 
of the wide, or fat join operation on simplicial sets.  We also prove 
some additional results on inner anodyne morphisms that may be of independent interest.    
\end{abstract}
\maketitle

\section{Introduction} 
\label{sec:intro}

Recall (see \cite{EP,J1,HTT,RV}) that the {\em join} of simplicial sets $A$ and $B$ is the 
simplicial set $A\star B$ whose set of $n$-simplices for $n\geq 0$ 
is given by 
\[
(A\star B)_n = A_n \sqcup B_n \sqcup \bigsqcup_{i+j=n-1} A_i\times B_j. 
\]
For a fixed simplicial set $A$, the functor $A\star (-)\colon \sSet\to (\sSet)_{A/}$ 
commutes with colimits and hence 
admits a right adjoint.  If $p\colon A\to X$ is an object in $(\sSet)_{A/}$ then 
its image under this right adjoint is the simplicial set 
$X_{p/}$ defined in Section 1.2.9 of \cite{HTT}.  A fundamental result 
in the theory of $\infty$-categories is the following stability 
result due to Joyal (see Theorem 3.8 of \cite{J1}) and Lurie (see Proposition 2.1.2.1 of \cite{HTT}).  

\begin{thm}[\cite{J1,HTT}] 
\label{thm:A}
Suppose given a diagram 
\[
L\xrightarrow{u} X \xrightarrow{p} S 
\]
of simplicial sets and maps between them.  Suppose that 
$p\colon X\to S$ is an inner fibration between simplicial sets. 
If $K\subset L$ is a subcomplex 
and $v= u|_K$ then the induced map 
\[
X_{u/}\to X_{v/}\times_{S_{pv/}}S_{pu/} 
\]
is a left fibration which is a trivial Kan fibration if the inclusion 
$K\subset L$ is inner anodyne.  
\end{thm}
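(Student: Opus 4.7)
\emph{Proof plan.} My plan is to translate the required lifting properties of $\pi\colon X_{u/}\to X_{v/}\times_{S_{pv/}}S_{pu/}$ into lifting conditions on the inner fibration $p$ via the join--slice adjunction, and then to reduce the problem to two explicit identifications of horns inside a large simplex.

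The starting point is the adjunction between the join functor $L\star(-)\colon\sSet\to(\sSet)_{L/}$ and the slice construction $(u\colon L\to X)\mapsto X_{u/}$, together with its analogue in the slices over $S$. Under these adjunctions, a lifting problem for $\pi$ against a monomorphism $A\hookrightarrow B$ transposes to a lifting problem for $p$ against the pushout--join (Leibniz join)
\[
\iota_{K\subset L,\,A\subset B}\colon(L\star A)\cup_{K\star A}(K\star B)\longrightarrow L\star B.
\]
Since $p$ is an inner fibration, it suffices to show that $\iota$ is inner anodyne in two distinct situations. First, that for an arbitrary monomorphism $K\subset L$ and any left horn $\Lambda^n_i\subset\Delta^n$ with $0\le i<n$ the map $\iota$ is inner anodyne; this gives $\pi$ the right lifting property against the generating left anodyne maps, hence $\pi$ is a left fibration. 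Second, that when $K\subset L$ is inner anodyne and $A\subset B=\partial\Delta^n\subset\Delta^n$ is any boundary inclusion, $\iota$ is inner anodyne; this gives $\pi$ the right lifting property against all monomorphisms, hence $\pi$ is a trivial Kan fibration.

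Both claims reduce, via the weak saturation of the class of inner anodyne morphisms, to a single combinatorial identification. Since pushout--join regarded as a functor in either variable with the other fixed preserves colimits in the arrow category and sends monomorphisms to monomorphisms, the class of morphisms in either variable for which $\iota$ is inner anodyne is weakly saturated. For the first claim it therefore suffices to treat $K\subset L=\partial\Delta^m\subset\Delta^m$; for the second, $K\subset L=\Lambda^m_j\subset\Delta^m$ with $0<j<m$. Viewing simplices of $\Delta^m\star\Delta^n=\Delta^{m+n+1}$ as subsets of $\{0,\ldots,m+n+1\}$, a direct inspection of the missing simplices yields
\[
(\Delta^m\star\Lambda^n_i)\cup_{\partial\Delta^m\star\Lambda^n_i}(\partial\Delta^m\star\Delta^n)=\Lambda^{m+n+1}_{m+1+i}
\]
in the first case and
\[
(\Delta^m\star\partial\Delta^n)\cup_{\Lambda^m_j\star\partial\Delta^n}(\Lambda^m_j\star\Delta^n)=\Lambda^{m+n+1}_{j}
\]
in the second, the omitted simplices in each union being the top simplex $\{0,\ldots,m+n+1\}$ and one additional face. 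In each case the horn index lies strictly between $0$ and $m+n+1$, so the horn is inner and the inclusion is inner anodyne by definition.

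The main obstacle I anticipate is the careful execution of the saturation argument for the second claim, since inner anodyne morphisms admit no manifest cellular decomposition. One must proceed in two stages: first fix the inner anodyne morphism $K\to L$ and use saturation in the $A\to B$ variable to reduce to $A\to B=\partial\Delta^n\subset\Delta^n$, then use saturation in the $K\to L$ variable to reduce to an inner horn generator $\Lambda^m_j\subset\Delta^m$. Once these reductions are in place, the proof rests entirely on the two horn identifications above.
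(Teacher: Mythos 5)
The paper does not prove Theorem~\ref{thm:A}; it is quoted from Joyal and Lurie, so there is no in-paper argument to compare against. Your proposal is correct and reconstructs the standard proof (essentially that of Proposition 2.1.2.1 and Lemma 2.1.2.3 of \cite{HTT}): transpose across the join--slice adjunction, reduce by weak saturation in each variable of the pushout-join to generating inclusions, and verify the two horn identifications, whose indices $m+1+i$ and $j$ indeed land strictly between $0$ and $m+n+1$ exactly because the horn in the second join factor is a left horn (resp.\ the horn in the first factor is inner).
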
 

There is another join construction for simplicial sets, due 
originally to Joyal and developed further in \cite{HTT} (this notion of join  
also plays a prominent role in \cite{RV}).  If $A$ and $B$ are simplicial 
sets then the {\em fat} or {\em wide join} is the simplicial set $A\diamond B$ defined 
by the pushout diagram 
\[
\begin{tikzcd} 
 \partial\Delta^1 \times A\times B\arrow[d] \arrow[r] & \Delta^1\times A\times  B \arrow[d] \\ 
A\sqcup B \arrow[r] & A\diamond B
\end{tikzcd}
\]
in $\sSet$.  It follows easily that the set $(A\diamond B)_n$ of 
$n$-simplices of $A\diamond B$ is given by 
\[
(A\diamond B)_n = A_n \sqcup \bigsqcup^{n-1}_{i=1} (A_n\times B_n) \sqcup B_n.  
\]
The wide join has the advantage over the join construction 
described above in that it is simple to define, and is a fairly 
robust construction --- it is a homotopy pushout in the Joyal 
model structure on simplicial sets.  There is also a 
canonical comparison map $A\diamond B\to A\star B$ for any simplicial 
sets $A$ and $B$, this turns out to be a categorical equivalence 
(see Proposition 4.2.1.2 of \cite{HTT}).  In other respects, the 
wide join is not as convenient as the join construction; for 
instance, there is an isomorphism $\Delta^0\diamond \Delta^0\simeq \Delta^1$, but 
there is not an isomorphism $\Delta^0\diamond \Delta^1\simeq \Delta^2$ --- one can check that $\Delta^0\diamond \Delta^1$ has 
two non-degenerate 2-simplices.  

For each fixed simplicial set $K$, the functor $K\diamond (-)\colon \sSet\to (\sSet)_{K/}$ 
preserves colimits (see Section 4.2.1 of \cite{HTT}) and hence admits a right 
adjoint whose value on a map $p\colon K\to X$ in $(\sSet)_{K/}$ is the {\em wide slice}
$X^{p/}$.  We shall prove the following theorem in Section~\ref{sec:stability}. 

\begin{thm} 
\label{thm:B}
Suppose given maps 
\[
A \xrightarrow{u} B\xrightarrow{f} X\xrightarrow{p} Y  
\]
in $\sSet$.  If $p$ is an inner fibration then the induced map 
\[
X^{f/}\to X^{fu/}\times_{Y^{pfu/}} Y^{pf/} 
\]
is a left fibration which is a trivial Kan fibration if $u$ is inner anodyne.  
\end{thm}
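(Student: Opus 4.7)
The plan is to use the adjunction between the wide join and the wide slice to translate the theorem into a statement about inner anodyne maps, and then to verify that statement by a combinatorial analysis of the wide join. For a monomorphism $v \colon C \hookrightarrow D$, the adjunction $B \diamond (-) \dashv (-)^{f/}$, together with its analogues for the three other wide slices appearing in the statement, identifies lifting problems against the comparison map
\[ X^{f/} \longrightarrow X^{fu/} \times_{Y^{pfu/}} Y^{pf/} \]
with lifting problems against $p \colon X \to Y$ whose left-hand map is the \emph{Leibniz wide join}
\[ u \diamond' v \colon (B \diamond C) \cup_{A \diamond C} (A \diamond D) \longrightarrow B \diamond D. \]
Since $p$ is an inner fibration, the theorem reduces to showing that $u \diamond' v$ is inner anodyne (i) whenever $v$ is left anodyne, giving the left fibration part, and (ii) whenever $u$ is inner anodyne and $v$ is any monomorphism, giving the trivial Kan fibration part. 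Because $\diamond$ commutes with colimits in each variable and the class of inner anodyne maps is saturated, standard cell-induction arguments reduce both (i) and (ii) to test cases in which $u$ and $v$ are horn or boundary inclusions.

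The next step is to rewrite the Leibniz wide join using the defining pushout of $\diamond$. Setting $P = (B \times C) \cup_{A \times C} (A \times D)$ and $Q = B \times D$, so that the ordinary Leibniz pushout-product $u \square v \colon P \to Q$ is a monomorphism, a direct calculation yields
\[ (B \diamond C) \cup_{A \diamond C} (A \diamond D) \;\cong\; (B \sqcup D) \cup_{\partial \Delta^1 \times P} (\Delta^1 \times P) \]
and $B \diamond D \cong (B \sqcup D) \cup_{\partial \Delta^1 \times Q} (\Delta^1 \times Q)$. Tracing through the universal properties, $u \diamond' v$ fits into a pushout square whose opposite edge is the iterated Leibniz pushout-product $(\partial \Delta^1 \hookrightarrow \Delta^1) \square (u \square v) \colon (\Delta^1 \times P) \cup_{\partial \Delta^1 \times P} (\partial \Delta^1 \times Q) \to \Delta^1 \times Q$, pushed out along the endpoint collapse $\partial \Delta^1 \times Q \to B \sqcup D$ that sends one copy of $Q$ to its image in $B$ and the other to its image in $D$.

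The main obstacle will be that the iterated pushout-product above is \emph{not} inner anodyne on its own: it also attaches outer horns supported over the two endpoints of $\Delta^1$. The endpoint collapse, however, is exactly what is needed to convert those outer-horn attachments into inner-horn attachments, since the two ends of $\Delta^1$ land in separate components of $B \sqcup D$ and so the offending faces are crushed onto the base. Making this precise will require an explicit filtration of $B \diamond D$ extending $(B \diamond C) \cup_{A \diamond C} (A \diamond D)$ one non-degenerate simplex at a time, ordered so that at each stage the attaching map is an inner horn inclusion. This filtration is the technical core of the proof; it parallels the argument used by Joyal and Lurie for the ordinary join in the proof of Theorem~\ref{thm:A} but is more delicate because the non-degenerate simplices of a wide join carry an extra $\Delta^1$-coordinate. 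I expect the combinatorial inner-anodyne lemmas that underpin this filtration to be among the auxiliary results developed in the earlier sections of the paper and to be applied here essentially as a black box.
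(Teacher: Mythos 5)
Your opening move is exactly the paper's: by adjointness the statement reduces to showing that the Leibniz wide join $u\diamond' v$ is inner anodyne (i) for arbitrary monomorphisms $u$ and left anodyne $v$, and (ii) for inner anodyne $u$ and arbitrary monomorphisms $v$. These are precisely Theorem~\ref{thm:C} and Proposition~\ref{prop:wide join of inner anodynes}, and the paper's proof of Theorem~\ref{thm:B} is nothing more than this reduction. So the framing is right.

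The gap is in how you propose to establish those two inputs. You plan to filter $B\diamond D$ relative to $B\diamond C\cup A\diamond D$ ``one non-degenerate simplex at a time, ordered so that at each stage the attaching map is an inner horn inclusion,'' with the endpoint collapse converting outer horns into inner horns. This does not work as stated: the collapse $\partial\Delta^1\times B\times D\to B\sqcup D$ makes the attaching maps of the join-type nondegenerate simplices of $B\diamond D$ \emph{non-injective} --- their initial and final faces are crushed onto $B$ and $D$ --- so the cells are attached along quotients of horns, not along horn inclusions. Already in the simplest case $\Delta^0\diamond\Delta^n$ the paper's filtration by the subcomplexes $T_i$ has attaching data $S_i\cap T_{i-1}=\Delta^{\set{0,\ldots,i-1,i',\ldots,n'}}\cup_{\Delta^{\set{0,\ldots,i-1}}}\Delta^0$, a collapsed face rather than a horn, and showing $S_i\cap T_{i-1}\to S_i$ is inner anodyne requires the right cancellation property. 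That property (Theorem~\ref{thm:E}) is the paper's actual technical engine: it is proved by a genuinely homotopy-theoretic argument (a relative homotopy through fiberwise equivalences plus Lemma~\ref{lem:filler}), not by combinatorics, and it is then combined with the criterion of Proposition~\ref{prop:contains left anodynes} to reduce case (i) all the way to $u=(\emptyset\subset\Delta^0)$ and $v$ an initial vertex map $\Delta^{\set{0}}\to\Delta^n$ --- a far smaller residue than the full pushout-product of a boundary inclusion with a left horn that your cell induction leaves behind. Case (ii) is likewise dispatched by right cancellation rather than by any filtration. In short: your reduction is correct, but the ``parallel to the ordinary join'' filtration you gesture at is not available for the wide join, and the missing idea is the right cancellation property of inner anodyne maps.
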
 

This result is a strengthening of Proposition 4.2.1.6 from \cite{HTT} where 
the result above is proven under the assumption that $p\colon X\to Y$ is a 
categorical fibration.  As far as we are aware Theorem~\ref{thm:B} has not 
appeared in the literature before.  The proof of Theorem~\ref{thm:B} will be easy, once we have 

\begin{thm} 
\label{thm:C}
Suppose $u\colon A\to B$ and $v\colon C\to D$ are monomorphisms of simplicial sets.  
If $v$ is left anodyne then the canonical map 
\[
u\diamond' v\colon B\diamond C \cup A\diamond D\to B\diamond D 
\]
is inner anodyne.  
\end{thm}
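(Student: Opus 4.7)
The proof proceeds by a standard reduction followed by a combinatorial filtration. First, for each fixed monomorphism $u$, the class of maps $v$ for which $u \diamond' v$ is inner anodyne is saturated, i.e.\ closed under retracts, pushouts, and transfinite composition, because inner anodynes are so closed and the pushout-product construction $u \diamond' (-)$ preserves colimits in $v$. Intersecting over all monomorphisms $u$ yields another saturated class, so it suffices to verify the claim when $v$ is a generating left anodyne $\Lambda^n_k \hookrightarrow \Delta^n$ with $0 \leq k < n$. By an analogous argument on the other variable, one may further reduce to the case where $u = \partial \Delta^m \hookrightarrow \Delta^m$ is a generating monomorphism.

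In this generating case, the pushout definition of the wide join permits an explicit description of the non-degenerate simplices of $\Delta^m \diamond \Delta^n$ not contained in $W := \Delta^m \diamond \Lambda^n_k \cup \partial\Delta^m \diamond \Delta^n$: they are the mixed simplices $\tau = (\sigma, b, d)$ of some dimension $p$, where $\sigma = 0^{p_0} 1^{p_1} \in (\Delta^1)_p$ is non-constant (so $p_0, p_1 \geq 1$), $b \in (\Delta^m)_p$ is surjective onto $[m]$, and $d \in (\Delta^n)_p$ has image either $[n]$ or $[n]\setminus\{k\}$. The plan is then to construct a transfinite filtration $W = W_{-1} \subset W_0 \subset W_1 \subset \cdots$ with $\bigcup_\alpha W_\alpha = \Delta^m \diamond \Delta^n$, where each successive inclusion $W_{\alpha-1} \subset W_\alpha$ is obtained as the pushout of an inner horn inclusion $\Lambda^p_j \hookrightarrow \Delta^p$ for some $0 < j < p$; the inner index $j$ is read off from the combinatorics of the missing simplex being attached (for example, a position at which $d$ attains the anodyne value $k$, so that $d_j d$ has image $[n]\setminus\{k\}$).

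The main obstacle is the combinatorial bookkeeping for this filtration. One must introduce a well-founded ordering on the missing simplices, say lexicographically by dimension, by the transition position $p_0$, and then by a secondary invariant that reflects the structure of $d$, and verify that for every $\tau$ the chosen $j$ is genuinely inner and that every face $d_i \tau$ with $i \neq j$ either lies in $W$ or has been attached at some earlier stage. The hypothesis $0 \leq k < n$ is essential here: it is exactly this condition that allows the anodyne direction in $\Delta^n$ to be matched with an inner vertex of the mixed simplex, furnishing the required inner horn filler at each step. Once the filtration has been verified, transfinite composition of inner anodyne pushouts completes the proof.
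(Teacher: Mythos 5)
Your two opening reductions (saturation in each variable separately, reducing to $v=\Lambda^n_k\subset\Delta^n$ with $0\leq k<n$ and $u=\partial\Delta^m\subset\Delta^m$) are fine. The gap is in the core claim that the resulting inclusion $W=\Delta^m\diamond\Lambda^n_k\cup\partial\Delta^m\diamond\Delta^n\subset\Delta^m\diamond\Delta^n$ admits a filtration in which every step is a pushout of an \emph{inner} horn inclusion. This is false already in the smallest case $m=0$, $n=1$, $k=0$. There $W$ is the spine $(\ast\to 0')\cup(0'\to 1')$ inside $\Delta^0\diamond\Delta^1=(\Delta^1\times\Delta^1)/(\set{0}\times\Delta^1)$, whose only non-degenerate simplices outside $W$ are two edges $e_{01},e_1\colon \ast\to 1'$ (the images of $((0,1),(0,1))$ and $((0,1),(1,1))$) and two $2$-simplices $\tau_A=((0,1,1),(0,0,1))$ and $\tau_B=((0,0,1),(0,1,1))$, with $\partial\tau_A=(0'\to 1',\,e_{01},\,e_0)$ and $\partial\tau_B=(e_1,\,e_{01},\,s_0\ast)$. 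Any inner-horn pushout landing in a $2$-dimensional subcomplex adds exactly one non-degenerate $2$-simplex together with its $d_1$-face; since $d_1\tau_A=d_1\tau_B=e_{01}$, the edge $e_1$ can never be created this way, and $\tau_B$ can only ever be attached along the outer horn $\Lambda^2_0$ with free face $d_0=e_1$. Note that your own recipe for the inner index $j$ (a position where $d$ takes the value $k$) returns $j=0$ for $\tau_B$, which is not inner; the hypothesis $0\leq k<n$ does not rescue this, because in the wide join the $D$-coordinate is not shifted past the $B$-coordinate the way it is in the ordinary join $\Delta^m\star\Delta^n\cong\Delta^{m+n+1}$.

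This is precisely the obstruction the paper is organized around: the map $u\diamond'v$ is inner anodyne, but not as a relative cell complex built from inner horns --- it only lies in the saturated class by virtue of retracts/cancellation. The paper's proof therefore first establishes that inner anodyne maps satisfy right cancellation (Theorem~\ref{thm:E}, itself a nontrivial argument using equivalences and Lemma~\ref{lem:filler}), reduces $u$ to $\emptyset\subset\Delta^0$ via the chain inclusions $I_n\subset\Delta^n$, reduces $v$ to the \emph{initial vertex maps} $\Delta^{\set{0}}\to\Delta^n$ (using Proposition~\ref{prop:contains left anodynes} rather than the left horns), and then handles the remaining combinatorics of $\Delta^0\diamond\Delta^n$ with repeated appeals to cancellation. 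To repair your argument you would need to prove and invoke something playing the role of Theorem~\ref{thm:E} (or otherwise exhibit the offending steps, such as $T_0\subset\Delta^0\diamond\Delta^1$ above, as retracts of inner anodyne maps); the direct inner-horn filtration cannot be made to work.
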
 

In Section~\ref{sec:stability} we shall also prove the following result, 
which gives an alternative characterization of cocartesian morphisms.  

\begin{thm}
\label{thm:D} 
Let $p\colon X\to S$ be an inner fibration of simplicial sets.  Suppose that $u\colon x\to y$ is an 
edge of $X$.  Then the following statements are equivalent: 
\begin{enumerate}
\item $u$ is a $p$-cocartesian morphism of $X$; 
\item the induced map $X^{u/}\to X^{x/}\times_{S^{p(x)/}}S^{p(u)/}$ is a trivial Kan fibration.   
\end{enumerate}
\end{thm}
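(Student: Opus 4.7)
The plan is to deduce $(1)\iff(2)$ from the classical characterization of $p$-cocartesian morphisms via the canonical comparison between the wide and ordinary slice constructions.  I would first apply Theorem~\ref{thm:B} with the monomorphism $\{0\}\hookrightarrow\Delta^1$ (in the role of the map $u$ in Theorem~\ref{thm:B}) and the given edge $u\colon\Delta^1\to X$ (in the role of $f$); since $p$ is an inner fibration, this immediately shows that
\[
\Psi^{\diamond}\colon X^{u/}\to X^{x/}\times_{S^{p(x)/}}S^{p(u)/}
\]
is a left fibration, with no assumption on $u$.  Consequently $(1)\iff(2)$ reduces to showing that $\Psi^{\diamond}$ is a trivial Kan fibration if and only if $u$ is $p$-cocartesian.

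Next I would invoke the classical characterization of $p$-cocartesian morphisms (cf. Proposition~2.4.1.5 of \cite{HTT}): $u$ is $p$-cocartesian iff the analogous map on ordinary slices
\[
\Psi^{\star}\colon X_{u/}\to X_{x/}\times_{S_{p(x)/}}S_{p(u)/}
\]
is a trivial Kan fibration, this map itself being a left fibration by Theorem~\ref{thm:A}.  The canonical comparison $A\diamond B\to A\star B$ (Proposition~4.2.1.2 of \cite{HTT}) is a categorical equivalence, and by naturality and adjunction it induces a commutative square
\[
\begin{tikzcd}
X_{u/} \arrow[r,"\Psi^{\star}"] \arrow[d,"\alpha"'] & X_{x/}\times_{S_{p(x)/}}S_{p(u)/} \arrow[d,"\beta"] \\
X^{u/} \arrow[r,"\Psi^{\diamond}"'] & X^{x/}\times_{S^{p(x)/}}S^{p(u)/}
\end{tikzcd}
\]
in which $\alpha$ and each pointwise comparison entering $\beta$ is a categorical equivalence.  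Combining Proposition~2.1.3.4 of \cite{HTT} (for a left fibration, being a trivial Kan fibration is equivalent to being a categorical equivalence) with the 2-out-of-3 property in this square then gives the required equivalence $(1)\iff(2)$.

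The main technical obstacle is to verify that the induced map $\beta$ between the two pullbacks is itself a categorical equivalence.  Although each pointwise comparison $X_{x/}\to X^{x/}$, $S_{p(x)/}\to S^{p(x)/}$, $S_{p(u)/}\to S^{p(u)/}$ is a categorical equivalence, concluding the same for the induced map between pullbacks requires both pullbacks to be homotopy pullbacks in the Joyal model structure.  This reduces in turn to identifying one of the structure maps in each pullback—for example $S^{p(u)/}\to S^{p(x)/}$ and $S_{p(u)/}\to S_{p(x)/}$—as a categorical fibration, which should be extractable by specializing Theorems~\ref{thm:A} and~\ref{thm:B} to suitable inputs (for instance, to trivial or constant situations over a point).
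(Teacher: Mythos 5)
Your first step is sound: Theorem~\ref{thm:B} applied to the inclusion $\{0\}\subset\Delta^1$ does show that $X^{u/}\to X^{x/}\times_{S^{p(x)/}}S^{p(u)/}$ is a left fibration with no hypothesis on $u$, and the reformulation of ``$p$-cocartesian'' as ``$X_{u/}\to X_{x/}\times_{S_{p(x)/}}S_{p(u)/}$ is a trivial Kan fibration'' is essentially the definition. But the step you flag as the ``main technical obstacle'' is a genuine gap, and in fact it is worse than you describe, in two ways. First, the pointwise comparisons $X_{u/}\to X^{u/}$, $X_{x/}\to X^{x/}$, $S_{p(u)/}\to S^{p(u)/}$, $S_{p(x)/}\to S^{p(x)/}$ are only known to be categorical equivalences when the ambient simplicial set is an $\infty$-category (HTT Proposition 4.2.1.5); here $X$ and $S$ are arbitrary simplicial sets, and no such statement is available --- the two slice constructions are right adjoints of naturally equivalent left Quillen functors, which identifies their values up to equivalence only on fibrant objects. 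Second, even granting the pointwise equivalences, deducing that $\beta$ (and hence, by 2-out-of-3 in your square, the horizontal comparison) behaves well requires both pullback squares to be homotopy pullbacks in the Joyal model structure. That model structure is not right proper, so exhibiting $S^{p(u)/}\to S^{p(x)/}$ as a categorical fibration would not suffice: you would also need the corners to be fibrant, i.e.\ again you need $S$ to be an $\infty$-category. Moreover these structure maps are a priori only left fibrations, which are categorical fibrations only over $\infty$-categories. A rescue would be to first reduce to the case where $S$ (hence $X$) is an $\infty$-category, using that both conditions are detected by lifting problems whose bases factor through $\Delta^{n+2}$, respectively $\Delta^1\diamond\Delta^n$; but that reduction needs its own argument and is not in your write-up.

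For contrast, the paper works directly with the generating lifting problems and never compares the two slice constructions globally. For the implication (2)$\Rightarrow$(1) it uses the categorical equivalence $A\diamond B\to A\star B$ only on the cofibrant domains $\Delta^1\diamond\partial\Delta^n\cup\{0\}\diamond\Delta^n$ and $\Lambda^{n+2}_0$, where left properness applies safely, and then runs a factorization-and-retract argument to exhibit $\Lambda^{n+2}_0\subset\Delta^{n+2}$ as a retract of a map against which $p$ lifts. For (1)$\Rightarrow$(2) it identifies $\Delta^1\diamond\partial\Delta^n\cup\{0\}\diamond\Delta^n\subset\Delta^1\diamond\Delta^n$ with a filtration of the prism $\Delta^1\times\Delta^n$ (as in Lemma~\ref{lem:filler}) and fills the simplices $\sigma_i$ one at a time, using inner horns except for a single $\Lambda^{n+1}_0$-horn whose initial edge is $u$. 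If you want to pursue your comparison-of-slices route, you must first carry out the reduction to $S$ an $\infty$-category; as written, the argument does not close.
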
 

Our main technical tool for the proof of Theorem~\ref{thm:C} will be the following theorem which is 
of interest in its own right.  

\begin{thm} 
\label{thm:E} 
The class of inner anodyne maps has the right cancellation property. 
\end{thm}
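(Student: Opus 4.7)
The plan is to prove the right cancellation property for inner anodyne maps, that is, to show that if $u\colon A\to B$ and $gu\colon A\to C$ are inner anodyne, then $g\colon B\to C$ is inner anodyne. My strategy is to apply the small object argument to factor $g$ through an inner fibration and then to extract a two-sided inverse using the inner anodyne hypotheses.

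Apply the small object argument to factor $g$ as
\[
B \xrightarrow{\;j\;} B' \xrightarrow{\;q\;} C,
\]
where $j$ is inner anodyne and $q$ is an inner fibration. Since the class of inner anodyne maps is closed under composition with isomorphisms, it suffices to prove that $q$ is an isomorphism, for then $g = qj$ is inner anodyne as the composite of an inner anodyne map and an iso. Now $ju\colon A\to B'$ is inner anodyne (being a composite of inner anodynes), and the square
\[
\begin{tikzcd}
A \arrow[r, "ju"] \arrow[d, "gu"'] & B' \arrow[d, "q"] \\
C \arrow[r, "\mathrm{id}_C"'] & C
\end{tikzcd}
\]
commutes with $gu$ inner anodyne and $q$ an inner fibration, so admits a diagonal filler $s\colon C\to B'$ satisfying $qs = \mathrm{id}_C$ and $s\circ gu = ju$. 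Thus $s$ is a section of $q$, and $sg$ and $j$ coincide after precomposition with $u$.

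The main task that remains is to show $sq = \mathrm{id}_{B'}$; that is, to upgrade $s$ from a section to a two-sided inverse of $q$. Both $sq$ and $\mathrm{id}_{B'}$ are endomorphisms of $B'$ living over $q$, and they agree on the image of $ju\colon A\to B'$. To promote this partial agreement to genuine equality one invokes Joyal's pushout-product theorem, which states that the pushout-product of an inner anodyne map with a monomorphism is again inner anodyne. Applied to $u\colon A\to B$ and $\partial\Delta^1\hookrightarrow\Delta^1$, it shows that the inclusion $(A\times\Delta^1)\cup(B\times\partial\Delta^1)\hookrightarrow B\times\Delta^1$ is inner anodyne; combined with the fact that $ju$ is inner anodyne one then produces, via successive lifts against $q$, a simplicial cylinder witnessing agreement of $sq$ and $\mathrm{id}_{B'}$ over $C$, from which a rigidification argument extracts the strict equality. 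Once $q$ is an isomorphism, $g = qj$ is inner anodyne and we are done.

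I expect the main obstacle to lie precisely in the last stage of rigidification: converting a simplicial homotopy between maps over $C$, known to be constant on the image of $ju$, into a strict equality of maps $B'\to B'$. The subtlety reflects the fact that the class of inner anodyne maps does not arise as the trivial cofibrations of a Quillen model structure on $\sSet$, so the cancellation property cannot be derived from a soft two-out-of-three argument and instead must be verified concretely, leaning on stability results such as Joyal's pushout-product theorem. An alternative route, if this step proves delicate, is to reduce by transfinite induction on a cellular presentation of $u$ to the case of a single inner horn inclusion $\Lambda^n_k\hookrightarrow\Delta^n$ with $0<k<n$, and handle that case by a combinatorial analysis of non-degenerate simplices.
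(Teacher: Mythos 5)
Your proposal has a fatal gap at its central step. After factoring $g = qj$ with $j$ inner anodyne and $q$ an inner fibration, you propose to show that $q$ is an isomorphism by upgrading the section $s$ to a two-sided inverse. This cannot be done: the object $B'$ produced by the small object argument is vastly larger than $C$, and $q$ is essentially never an isomorphism. What your homotopical argument can produce is at best a fibrewise homotopy between $sq$ and $\mathrm{id}_{B'}$ rel the image of $ju$; there is no ``rigidification'' converting such a homotopy into an equality of maps of simplicial sets, and the obstacle you flag as ``delicate'' is in fact insurmountable. The natural repair --- settle for showing that $q$ is a \emph{trivial Kan fibration} and then run the retract argument (which would succeed, since $g$ is a monomorphism and so lifts against trivial Kan fibrations, exhibiting $g$ as a retract of $j$) --- also does not follow from soft two-out-of-three reasoning: although $q$ is a categorical equivalence, an inner fibration which is a categorical equivalence need not be a trivial Kan fibration (for instance the vertex inclusion $\Delta^{\set{0}}\to N(I)$, with $I$ the free-standing isomorphism, is an inner fibration and a categorical equivalence but is not surjective on vertices). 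Your fallback of inducting over a cellular presentation of $u$ does not help either: right cancellation is a statement about $g$, and knowing $u$ is a single inner horn inclusion gives no handle on the cells of $g$.

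The paper's proof takes a genuinely different route: it verifies directly that $g$ has the left lifting property against an arbitrary inner fibration $p\colon X\to Y$. Given a lifting problem $(u',v)$ for $g$ against $p$ (writing $u'\colon B\to X$ for the top map), one first uses the hypothesis that $gu$ is inner anodyne to produce $\phi\colon C\to X$ over $Y$ agreeing with $u'$ on $A$. Then the hypothesis that $u\colon A\to B$ is inner anodyne makes $X^B\to Y^B\times_{Y^A}X^A$ a trivial Kan fibration, so $\phi g$ and $u'$, lying in the same fibre, are joined by a homotopy $h$ rel $A$ over $Y$ which is an equivalence in each fibre $X_{v(b)}$. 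The crux is then to extend the pair $(h,\phi)$ from $\set{0}\times C\cup \Delta^1\times B$ to all of $\Delta^1\times C$, which is done skeleton by skeleton using a dedicated lemma (Lemma~\ref{lem:filler}): a square with left edge $(\Delta^1\times\partial\Delta^n)\cup(\set{0}\times\Delta^n)\subset \Delta^1\times\Delta^n$ over an inner fibration admits a filler provided the initial edge is an equivalence in the fibre. Restricting the resulting extension to $\set{1}\times C$ gives the required lift. This is where the real content lies, and it is absent from your outline; Joyal's pushout-product theorem alone, which you invoke, only controls homotopies over $B$, not their extension over $C$.
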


Recall (see \cite{JT}) that a class $\cA$ of monomorphisms in $\sSet$ is said to 
satisfy the {\em right cancellation} property if the following is 
true: for all monomorphisms of simplicial sets $i\colon A\to B$ and 
$j\colon B\to C$, if $i, ji\in \cA$ then $j\in \cA$.

In summary then this paper is as follows.  In Section~\ref{sec:inner} we prove 
Theorem~\ref{thm:E} and establish some corollaries of it.  In Section~\ref{sec:stability} we prove Theorem~\ref{thm:C}, 
the stability theorem (Theorem~\ref{thm:B}) and we prove Theorem~\ref{thm:D} 
giving an alternate description of cocartesian morphisms in terms of the wide slice.  
For the most part we will use the notation from \cite{HTT}; thus $\sSet$ will denote 
the category of simplicial sets, for instance.  

\section{Inner anodyne maps}
\label{sec:inner} 

The aim of this section is to prove Theorem~\ref{thm:E} and explore some corollaries of it.  
To begin with, we give some examples of classes 
of monomorphisms with the right cancellation property.  
The class of anodyne maps in $\sSet$ has the right 
cancellation property (this is a simple consequence of the 2-out-of-3 
property for weak homotopy equivalences).  Less obvious is the 
following result due to Joyal (see Corollary 8.15 of \cite{J2}) and Lurie (see Corollary 4.1.2.2 of \cite{HTT}).  

\begin{proposition}[\cite{J2,HTT}] 
The class of left anodyne maps in $\sSet$ satisfies the 
right cancellation property. 
\end{proposition}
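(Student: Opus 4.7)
Plan: I would verify that $j$ satisfies the left lifting property with respect to every left fibration, relying as the only nontrivial input on the standard fact that the pushout-product of a left anodyne map with any monomorphism of simplicial sets is again left anodyne (Corollary 2.1.2.7 of \cite{HTT}).

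Fix a left fibration $p\colon X\to S$ and a commuting square with top and bottom edges $f\colon B\to X$ and $g\colon C\to S$, where $pf=gj$. Precomposing with $i$ produces a lifting problem for the left anodyne map $ji$, which therefore admits a solution $h\colon C\to X$ with $ph=g$ and $hji=fi$. The only obstruction to $h$ being a solution of the original square is that $hj$ may differ from $f$ on $B\setminus A$; note that the two maps agree on $A$ and have common composite $gj$ with $p$. I would remove this obstruction in two steps: first build a homotopy from $hj$ to $f$ relative to $A$ and over $gj$, then extend that homotopy from $B$ to $C$ to modify $h$ accordingly.

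For the first step, I would construct $H\colon B\times\Delta^1\to X$ with $H|_{B\times\{0\}}=hj$, $H|_{B\times\{1\}}=f$, equal to the constant homotopy at $fi$ on $A\times\Delta^1$, and satisfying $pH=gj\circ\mathrm{pr}_B$. The prescribed boundary data is compatible because $hji=fi=f|_A$, and $H$ exists as a lift against $p$ since the inclusion
\[
(B\times\partial\Delta^1)\cup(A\times\Delta^1)\hookrightarrow B\times\Delta^1
\]
is the pushout-product of the left anodyne map $i$ with the monomorphism $\partial\Delta^1\hookrightarrow\Delta^1$, hence left anodyne. For the second step, I would extend $H$ to $\tilde H\colon C\times\Delta^1\to X$ with $\tilde H|_{C\times\{0\}}=h$ and $p\tilde H=g\circ\mathrm{pr}_C$. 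The relevant inclusion
\[
(B\times\Delta^1)\cup(C\times\{0\})\hookrightarrow C\times\Delta^1
\]
is the pushout-product of the monomorphism $j$ with the left horn $\Lambda^1_0=\{0\}\hookrightarrow\Delta^1$, so it is again left anodyne. The restriction $\tilde H|_{C\times\{1\}}\colon C\to X$ then covers $g$ and restricts to $f$ on $B$, which is precisely the lift required.

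Essentially everything is packaged inside the pushout-product stability cited at the outset: once that is granted, the rest is a routine two-step mapping cylinder argument. It is worth emphasising that this pattern does \emph{not} transfer to inner anodyne maps, since the pushout-product of an inner anodyne map with an arbitrary monomorphism need not be inner anodyne; this is precisely why Theorem~\ref{thm:E} demands a new approach rather than a direct cylinder construction of the sort used here.
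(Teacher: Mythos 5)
Your argument is correct. The paper itself gives no proof of this proposition --- it is quoted from Joyal and Lurie --- so there is nothing internal to compare against line by line; but your route differs from the cited one in \cite{HTT}, where the result is deduced from a model-categorical characterization of left anodyne maps as covariant trivial cofibrations together with two-out-of-three. Your direct two-step cylinder argument (lift against $ji$, homotope $hj$ to $f$ rel $A$ over $S$, extend the homotopy along $(B\times\Delta^1)\cup(C\times\{0\})\hookrightarrow C\times\Delta^1$, evaluate at $1$) is more elementary and self-contained, and it is in fact exactly the template the paper adapts in its proof of Theorem~\ref{thm:E}; you have correctly oriented the homotopy so that the second extension uses the pushout-product of the monomorphism $j$ with the left anodyne inclusion $\{0\}\hookrightarrow\Delta^1$. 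Both lifting problems are against a left fibration with left anodyne maps on the left, so the argument goes through.

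One peripheral correction: your closing diagnosis of why this pattern fails for inner anodyne maps is not right. The pushout-product of an inner anodyne map with an arbitrary monomorphism \emph{is} inner anodyne (Corollary 2.3.2.4 of \cite{HTT}); indeed the paper uses precisely this, via Corollary 2.3.2.5, to carry out the analogue of your first step in the proof of Theorem~\ref{thm:E}. The step that genuinely breaks is the second one: $\{0\}\hookrightarrow\Delta^1$ is left anodyne but not inner anodyne, so the pushout-product $(B\times\Delta^1)\cup(C\times\{0\})\hookrightarrow C\times\Delta^1$ need not be inner anodyne, and the homotopy cannot be extended to $C$ by a lifting argument alone. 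This is why the paper must record that the homotopy is fiberwise an equivalence and invoke Lemma~\ref{lem:filler} to perform the extension. This does not affect the validity of your proof of the stated proposition.
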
 

The right cancellation property forms part of several  
criteria for showing that a class of monomorphisms contains a 
given class as a subclass.  For example 
we have the following result due to Joyal and Tierney (see Lemma 3.5 of 
\cite{JT}) 

\begin{proposition}[\cite{JT}] 
Let $\cA$ be a saturated class of monomorphisms in $\sSet$ 
satisfying the right cancellation property.  If $\cA$ contains 
the inclusions $I_n\subset \Delta^n$ for all $n\geq 2$ 
then $\cA$ contains every inner anodyne map.  
\end{proposition}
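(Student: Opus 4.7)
The plan is to prove by induction on $n \geq 2$ that every inner horn inclusion $\Lambda^n_k \hookrightarrow \Delta^n$ (for $0 < k < n$) belongs to $\cA$. This suffices because $\cA$ is saturated and the inner anodyne maps are, by definition, the saturated class generated by these inner horn inclusions. The base case $n = 2$ is immediate, since $\Lambda^2_1 = I_2$ and the spine inclusion $I_2 \hookrightarrow \Delta^2$ is in $\cA$ by hypothesis.

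For the inductive step, suppose $n \geq 3$ and that every inner horn inclusion of dimension less than $n$ lies in $\cA$. Fix $0 < k < n$. The spine inclusion $I_n \hookrightarrow \Delta^n$ is in $\cA$ by hypothesis and factors as $I_n \hookrightarrow \Lambda^n_k \hookrightarrow \Delta^n$, so the right cancellation property of $\cA$ reduces the problem to showing that $I_n \hookrightarrow \Lambda^n_k$ lies in $\cA$. For this I would construct a finite filtration $I_n = X_0 \subset X_1 \subset \cdots \subset X_N = \Lambda^n_k$ in which each $X_\ell$ is obtained from $X_{\ell-1}$ by adjoining a single missing face $d_j \Delta^n$ with $j \neq k$, the faces being added in the order $j = 0, n, 1, n-1, 2, n-2, \ldots$, skipping $k$. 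At each stage the attaching map is the inclusion of $X_{\ell-1} \cap d_j \Delta^n$ into $d_j \Delta^n \cong \Delta^{n-1}$. Direct inspection shows that the initial attachments have the spine $I_{n-1}$ as their intersection; the intermediate attachments produce subcomplexes of $\Delta^{n-1}$ built from the spine together with a few lower-dimensional faces; and the final attachment recovers an inner horn $\Lambda^{n-1}_\ell \hookrightarrow \Delta^{n-1}$ supplied by the inductive hypothesis. The intermediate cases in turn lie in $\cA$ via a secondary application of right cancellation against the spine inclusion $I_{n-1} \hookrightarrow \Delta^{n-1}$, reducing them to pushouts of spine inclusions of still lower dimension. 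Saturation of $\cA$ then gives $I_n \hookrightarrow \Lambda^n_k \in \cA$.

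The main obstacle is the combinatorial verification that the filtration actually has this claimed intersection structure at every stage, i.e.\ that with the chosen order of attachment each intersection is either a spine, a union of the spine with a few faces whose inclusion into $\Delta^{n-1}$ can be resolved by right cancellation, or a genuine inner horn of dimension $n-1$. The pattern can be checked directly in small dimensions: for $n = 3$ all attaching maps are spine inclusions, and for $n = 4$ the last attachment is exactly $\Lambda^3_k \hookrightarrow \Delta^3$. Extending this to general $n$ is an exercise in tracking which faces of $\Delta^n$ have been attached at each stage and computing the resulting intersection, and it is the technical heart of the argument.
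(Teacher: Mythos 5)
The paper offers no proof of this proposition --- it is quoted from Lemma 3.5 of \cite{JT} --- so the comparison here is with the standard argument rather than with anything internal to the paper. Your overall strategy is the right one and matches the known proof in outline: induct on $n$, factor the spine inclusion as $I_n\subset \Lambda^n_k\subset \Delta^n$, use right cancellation to reduce to showing $I_n\subset \Lambda^n_k$ lies in $\cA$, and build $\Lambda^n_k$ from $I_n$ by attaching the faces $d_j\Delta^n$, $j\neq k$, one at a time.

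There is nevertheless a genuine gap, and you have named it yourself: the ``combinatorial verification that the filtration actually has this claimed intersection structure'' is not a deferrable exercise --- it is the entire content of the lemma --- and your description of the intermediate stages is not accurate as stated. When $d_j\Delta^n$ is attached to $X_{\ell-1}$, the intersection $X_{\ell-1}\cap d_j\Delta^n$ is the union of $I_n\cap d_j\Delta^n$ with \emph{all} codimension-two faces $d_{j'}\Delta^n\cap d_j\Delta^n$ for the previously attached indices $j'$; after relabelling this is a subcomplex of $\Delta^{n-1}$ of the form $I_{n-1}\cup\bigcup_{j'\in J}d_{j'}\Delta^{n-1}$ for some proper subset $J$ --- a generalized horn, not ``the spine together with a few lower-dimensional faces''. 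Proving that such inclusions lie in $\cA$ is a statement of exactly the same type as the one being proved (when $J$ omits a single inner index it \emph{is} the inner horn case), so handling the intermediate attachments by a ``secondary application of right cancellation'' without strengthening the inductive hypothesis is circular, or at best unsubstantiated. The standard repair is to first establish, via right cancellation against spine inclusions, that the two-interval inclusions $\Delta^{\{0,\ldots,i\}}\cup\Delta^{\{i,\ldots,m\}}\hookrightarrow\Delta^m$ lie in $\cA$, and then run the face-by-face induction with an inductive hypothesis strong enough to cover every generalized horn that actually occurs. Finally, you must check that the last attachment really is an \emph{inner} horn: the relabelled index is $k$ or $k-1$ according to whether the last face index exceeds $k$, and for some attachment orders (e.g.\ ending with $d_n$ when $k=n-1$) one lands on an outer horn, so the admissibility of your chosen order needs to be verified rather than asserted.
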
 

Here $I_n$ denotes the {\em $n$-chain}, i.e.\ 
\[
I_n  = \Delta^{\set{0,1}}\cup \Delta^{\set{1,2}}\cup \cdots \cup \Delta^{\set{n-1,n}}.  
\]
From \cite{S} we have an analogous criteria for left 
anodyne maps which we shall make use of in the following section.  

\begin{proposition}[\cite{S}]
\label{prop:contains left anodynes}
Let $\mathcal{A}$ be a saturated class of monomorphisms in $\sSet$ 
which satisfies the right cancellation property.  Then the following statements 
are equivalent: 
\begin{enumerate} 
\item $\mathcal{A}$ contains the class of left anodyne morphisms; 
\item $\mathcal{A}$ contains the initial vertex maps $\Delta^{\set{0}}\to \Delta^n$ for all $n\geq 1$; 
\item $\mathcal{A}$ contains the horn inclusions $h^0_n\colon \Lambda^n_0\subset \Delta^n$ for all $n\geq 1$.  
\end{enumerate} 
\end{proposition}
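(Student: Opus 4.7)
The plan is to prove the cycle of implications $(1) \Rightarrow (2) \Rightarrow (3) \Rightarrow (1)$. The implication $(1) \Rightarrow (2)$ is essentially classical: the initial vertex inclusion $\Delta^{\set{0}} \hookrightarrow \Delta^n$ is a left anodyne map for every $n \geq 1$ (cf.\ Proposition 2.1.2.6 of \cite{HTT}), so if $\mathcal{A}$ contains the left anodyne morphisms it contains these inclusions.

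For $(2) \Rightarrow (3)$, fix $n \geq 1$ and factor the initial vertex map as
\[
\Delta^{\set{0}} \xrightarrow{j_n} \Lambda^n_0 \xrightarrow{h^0_n} \Delta^n.
\]
The composite lies in $\mathcal{A}$ by (2), so by right cancellation the problem reduces to showing $j_n \in \mathcal{A}$. I would prove this by induction on $n$, filtering $\Lambda^n_0$ by attaching the faces $d^1\Delta^n, d^2\Delta^n, \ldots, d^n\Delta^n$ in turn. The initial stage is the inclusion $\Delta^{\set{0}} \hookrightarrow d^1\Delta^n \cong \Delta^{n-1}$, which lies in $\mathcal{A}$ by (2). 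At each subsequent stage the attaching map is an inclusion $K \hookrightarrow d^{k+1}\Delta^n \cong \Delta^{n-1}$, where $K$ is a union of certain codimension-one faces of $d^{k+1}\Delta^n$, all of which contain the initial vertex. Factoring $\Delta^{\set{0}} \to K \to \Delta^{n-1}$, using (2) to place the composite in $\mathcal{A}$, and applying right cancellation reduces the question to showing $\Delta^{\set{0}} \hookrightarrow K$ lies in $\mathcal{A}$ for each such $K$; this is handled by a subsidiary induction on the number of faces in $K$.

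For $(3) \Rightarrow (1)$, I would first deduce $(3) \Rightarrow (2)$ by running the same filtration inductively on $n$: the base case $n = 1$ holds because $h^0_1$ coincides with $\Delta^{\set{0}} \hookrightarrow \Delta^1$, and the inductive step uses the filtration of $\Lambda^n_0$ exactly as above, now using $h^0_n$ from (3) for the outermost horn and the inductive hypothesis for the partial-horn attaching maps. Having both (2) and (3), it remains to show that every left horn inclusion $\Lambda^n_k \hookrightarrow \Delta^n$ with $0 < k < n$ lies in $\mathcal{A}$. Since $k \neq 0$, the initial vertex belongs to $\Lambda^n_k$, and the initial-vertex map factors as $\Delta^{\set{0}} \to \Lambda^n_k \to \Delta^n$. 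The composite is in $\mathcal{A}$ by (2), so right cancellation reduces the problem to showing $\Delta^{\set{0}} \hookrightarrow \Lambda^n_k$ belongs to $\mathcal{A}$; this is proved by the same filtration strategy, starting with any face $d^i\Delta^n$ with $i \neq 0, k$ that contains the initial vertex.

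The main obstacle will be the filtration bookkeeping. At each stage the attaching map is the inclusion of a partial horn (a union of some of the codimension-one faces) into a lower-dimensional simplex, and the intersections that arise as one adjoins further faces are themselves partial horns one dimension lower. The cleanest presentation is through an auxiliary lemma: assuming (2), every inclusion $\Delta^{\set{0}} \hookrightarrow K$ with $K$ a union of proper faces $d^i\Delta^m$ having $i \neq 0$ lies in $\mathcal{A}$. This lemma is proved by a double induction on $m$ and on the number of faces in $K$, using at each step the factorization through $\Delta^{\set{0}}$ and right cancellation against the initial-vertex maps. With this lemma in place, all the filtrations above go through by routine bookkeeping.
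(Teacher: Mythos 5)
The paper does not actually prove this proposition --- it is quoted from \cite{S} without proof --- so there is no in-text argument to compare against; I can only assess your proposal on its own terms. Your overall strategy (right cancellation against the initial-vertex maps, filtration of horns by their codimension-one faces, and the auxiliary lemma that $\Delta^{\set{0}}\hookrightarrow K$ lies in $\cA$ whenever $K$ is a union of faces $d^i\Delta^m$ with $i\neq 0$) is the standard route to this result and is sound. The implications $(1)\Rightarrow(2)$ and $(2)\Rightarrow(3)$ are correct as written: the auxiliary lemma goes through by the double induction you describe, since each intersection $d^{i_0}\Delta^m\cap\bigcup_{i\in S'}d^i\Delta^m$ is again a union of faces $d^{i'}\Delta^{m-1}$ with $i'\neq 0$, all containing the initial vertex.

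The one place your sketch is too quick is the final step of $(3)\Rightarrow(1)$, the case of $\Lambda^n_k\hookrightarrow\Delta^n$ with $0<k<n$. Here $\Lambda^n_k$ contains the face $d^0\Delta^n$, which does \emph{not} contain the vertex $0$, so your auxiliary lemma does not apply to it, and the ``same filtration strategy'' does not literally run: if you attach the faces $d^i\Delta^n$ with $i\neq 0,k$ first (which the lemma does handle) and then attach $d^0\Delta^n$ last, the attaching map is the \emph{full} left horn inclusion $\Lambda^{n-1}_{k-1}\subset\Delta^{n-1}$, i.e.\ exactly the statement you are trying to prove one dimension lower (or the case covered by $(3)$ when $k=1$). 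This is not a fatal gap --- the recursion $\Lambda^n_k\rightsquigarrow\Lambda^{n-1}_{k-1}\rightsquigarrow\cdots\rightsquigarrow\Lambda^{n-k}_0$ terminates at a horn covered by the auxiliary lemma --- but it means $(3)\Rightarrow(1)$ must be organized as an explicit induction on $n$ with ``all left horns of dimension $<n$ lie in $\cA$'' as the inductive hypothesis, rather than being reduced to the auxiliary lemma plus bookkeeping. You should make that induction explicit; with it in place the argument is complete.
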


We now turn our attention to the proof of Theorem~\ref{thm:E}.  We will need the following lemma.  

\begin{lemma} 
\label{lem:filler}
Let $p\colon X\to S$ be an inner fibration between simplicial sets and suppose given a 
commutative diagram 
\[
\begin{tikzcd}
(\Delta^1\times \partial\Delta^n)\cup (\set{0}\times \Delta^n) \arrow[rr,"u"] \arrow[d] & & X \arrow[d,"p"] \\ 
\Delta^1\times \Delta^n  \arrow[r,"{\mathit{pr}}_1"'] & \Delta^n \arrow[r,"v"'] & S
\end{tikzcd} 
\]
where $n\geq 1$.  If the edge $u|\Delta^1\times \set{0}$ is an equivalence in the $\infty$-category 
$X_{v(0)}$ then 
the square has a diagonal filler.  
\end{lemma}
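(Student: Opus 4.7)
The plan is to reduce the lifting problem to a sequence of horn-filling problems via the shuffle decomposition of the prism $\Delta^1\times \Delta^n$. Write
\[
\Delta^1\times \Delta^n = \sigma_0 \cup \sigma_1 \cup \cdots \cup \sigma_n,
\]
where $\sigma_k$ is the non-degenerate $(n+1)$-simplex spanned, in order, by the vertices $(0,0),(0,1),\ldots,(0,k),(1,k),(1,k+1),\ldots,(1,n)$. Since the bottom composite $v\circ \mathit{pr}_1$ collapses the $\Delta^1$-direction, every $\sigma_k$ is sent to a degenerate simplex of $S$, and the intended lift is fully determined over $S$.

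I would attach the simplices $\sigma_k$ to the initial data $Y=(\Delta^1\times \partial\Delta^n)\cup (\set{0}\times \Delta^n)$ in decreasing order of $k$, starting from $\sigma_n$. A routine face-by-face inspection shows that, at the step where we attach $\sigma_k$, every face of $\sigma_k$ other than the one obtained by omitting the vertex $(0,k)$ (which sits at position $k$ in the vertex ordering) is already present: faces omitting a vertex $(0,j)$ with $j<k$ or $(1,j)$ with $j>k$ lie in $\Delta^1\times \partial\Delta^n$, while the face omitting $(1,k)$ is shared with $\sigma_{k+1}$ when $k<n$ and lies in $\set{0}\times \Delta^n$ when $k=n$. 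Consequently each attachment is a horn-filling problem $\Lambda^{n+1}_k \hookrightarrow \Delta^{n+1}$; for $1\leq k\leq n$ the horn is inner, and a lift exists because $p$ is an inner fibration.

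The remaining case $k=0$ is a left outer horn $\Lambda^{n+1}_0 \hookrightarrow \Delta^{n+1}$. Its initial edge, from $(0,0)$ to $(1,0)$, is exactly $u|\Delta^1\times \set{0}$, assumed to be an equivalence in the fiber $X_{v(0)}$, and its image in $S$ is a degenerate $(n+1)$-simplex. The filler then exists by the standard result of Joyal and Lurie that an edge of an inner fibration which is an equivalence in its fiber over a degenerate edge of the base is $p$-cocartesian, and that such cocartesian edges satisfy the left lifting property against all outer horns of this form (compare the discussion around Propositions 2.4.1.5--2.4.1.8 of \cite{HTT}).

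The main obstacle is precisely this last step: the inner horn fillings for $1\leq k\leq n$ are mechanical, and the combinatorics is merely careful bookkeeping with the shuffle decomposition, but the outer horn at $k=0$ is where the hypothesis on $u|\Delta^1\times \set{0}$ is consumed. Converting ``equivalence in the fiber'' into the required outer-horn lifting property requires the equivalence-implies-cocartesian characterisation for inner fibrations, which is the genuine content of the lemma.
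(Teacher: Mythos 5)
Your proposal is correct and follows essentially the same route as the paper: the same shuffle decomposition of $\Delta^1\times\Delta^n$ into the simplices $\sigma_0,\ldots,\sigma_n$, attached in decreasing order so that each step is an inner horn $\Lambda^{n+1}_k\subset\Delta^{n+1}$ except the final $\Lambda^{n+1}_0$, where the hypothesis on $u|\Delta^1\times\set{0}$ is used. The only (cosmetic) difference is the citation for that last step: the paper pulls back to $\sigma_0$ and invokes Joyal's special outer horn theorem (Proposition 1.2.4.3 of \cite{HTT}) directly, whereas you route through the equivalence-implies-cocartesian characterisation; these are interchangeable here.
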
 

Here $X_{v(0)}$ denotes the $\infty$-category which is the fiber of $p\colon X\to S$ over the 
vertex $v(0)$ of $S$.  

\begin{proof} 
By pulling back $p\colon X\to S$ to $\Delta^n$ via $v$ we may suppose without loss of 
generality that $S$ and $X$ are $\infty$-categories.  
We recall some 
facts about the structure of the simplicial set $\Delta^1\times \Delta^n$.  
This simplicial set can be regarded as the nerve of the category $[1]\times [n]$, 
which may be pictured as 
\[
\begin{tikzcd} 
0 \arrow[d] \arrow[r] & 1 \arrow[d] \arrow[r] & \cdots \arrow[r] & n \arrow[d] \\ 
0' \arrow[r] & 1' \arrow[r] & \cdots \arrow[r] & n'. 
\end{tikzcd} 
\]
The simplicial set $\Delta^1\times \Delta^n$ is a union of $n+1$ non-degenerate 
$(n+1)$-simplices $\sigma_0$, $\sigma_1$, \ldots, $\sigma_n$ where 
\[
\sigma_0 = \Delta^{\set{0,0',\ldots,n'}},\quad \sigma_1 = \Delta^{\set{0,1,1',\ldots,n'}}, \ldots, 
\sigma_n = \Delta^{\set{0,1,\ldots,n,n'}}.
\]
Following the proof of Proposition 2.1.2.6 from \cite{HTT}, 
we define a chain of inclusions 
\[
A(n+1)\subset A(n)\subset \cdots \subset A(1)\subset A(0) 
\]
of $\Delta^1\times \Delta^n$ where 
\[
A(n+1) = \set{0}\times\Delta^n \cup \Delta^1\times\partial\Delta^n  
\]
and where $A(k)$ denotes the union of $A(k+1)$ and the simplex $\sigma_k$ (together 
with all of its faces).  Then $A(k)$ is the pushout 
\[
A(k+1)\cup_{\Lambda^{n+1}_k}\Delta^{n+1} 
\]
and $A(0) = \Delta^1\times \Delta^n$.  Since the inclusions $A(k+1)\subset A(k)$ are 
inner anodyne for $1\leq k\leq n$ we are reduced to the problem of showing that 
we may extend the map $\Lambda^{n+1}_0\to X_{\sigma_0}$ induced by $u$, over the simplex 
$\sigma_0$ (here $X_{\sigma_0}$ denotes the pullback of $X$ to $\sigma_0$).  
By hypothesis the edge $u|\Delta^1\times \set{0}$ is an equivalence in 
$X_{v(0)}$ and hence in $X_{\sigma_0}$; the result then follows from Proposition 1.2.4.3 of \cite{HTT}.   
\end{proof}

We now give the proof of Theorem~\ref{thm:E}.

\begin{proof}[Proof of Theorem~\ref{thm:E}]  
Suppose that $i\colon A\to B$ and $j\colon B\to C$ are monomorphisms 
of simplicial sets such that $i$ and $ji$ are inner anodyne.  We will 
prove that $j$ is inner anodyne.  It suffices to show that $j$ has 
the left lifting property against all inner fibrations.  

Let $p\colon X\to Y$ be an inner 
fibration between simplicial sets $X$ and $Y$ and suppose given 
a commutative diagram 
\[
\begin{tikzcd}
B \arrow[d,"j"'] \arrow[r,"u"] & X \arrow[d,"p"] \\ 
C \arrow[r,"v"'] & Y. 
\end{tikzcd} 
\]
We will show that there exists a diagonal filler for this diagram.  
Since $ji$ is inner anodyne we may choose a map $\phi\colon C\to X$ 
such that $p\phi = v$ and $\phi j i = ui$.  
Observe that $\phi j$ and $u$ lie in the same fiber of the canonical 
map $X^B\to Y^B\times_{Y^A}X^A$.  Since this map is a trivial Kan fibration 
(on account of the hypothesis on $i$ --- see Corollary 2.3.2.5 of \cite{HTT}) 
it follows that there exists a homotopy $h\colon \phi j\to u$ relative to 
$A$ over $Y$.   Thus $h$ determines a map $h\colon \Delta^1\times B\to X$ 
such that $h|\set{0}\times B = \phi j$, $h|\set{1}\times B = u$, $ph = vj\pi_B$, 
$h(1\times i) = ui\pi_A$, and $h|\Delta^1\times \set{b}$ is an equivalence 
in $X_{v(b)}$ for all vertices $b$ of $B$.  

Since $ph = vj\pi_B = v\pi_C(1\times j)$, the maps $h$ and $\phi$ determine a map $w\colon \set{0}\times C\cup \Delta^1\times B \to X$ 
forming part of a commutative diagram 
\[
\begin{tikzcd} 
\set{0}\times C\cup \Delta^1\times B \arrow[d] \arrow[rr,"w"] & &  X \arrow[d,"p"] \\ 
\Delta^1\times C \arrow[r,"\pi_C"'] & C \arrow[r,"v"'] & Y. 
\end{tikzcd} 
\]
Observe that $w|\Delta^1\times \set{b}$ is an equivalence in the $\infty$-category 
$X_{v(b)}$ for every vertex $b$ of $B$.  
To finish the proof it suffices to find a diagonal filler $d\colon \Delta^1\times C\to X$ 
for this diagram, for then $d|\set{1}\times C\to X$ is a diagonal filler for the original 
diagram above.  

Writing $C = B\cup \bigcup_{n\geq 1} \sk_n C$, where $\sk_n C$ denotes the $n$-skeleton of 
$C$ and noting that $A_0 = B_0 = C_0$, we see that it suffices to prove by induction on 
$n\geq 0$ that there is a diagonal filler for the induced diagrams  
\[
\begin{tikzcd}
\set{0}\times (B\cup \sk_{n+1}C)\cup \Delta^1\times (B\cup \sk_nC)  \arrow[d] \arrow[rr] & &  X \arrow[d,"p"] \\ 
\Delta^1\times(B\cup \sk_{n+1} C)  \arrow[r] & B\cup \sk_{n+1}C \arrow[r,"v"'] & Y. 
\end{tikzcd} 
\]
But the map $B\cup \sk_n C\to B\cup\sk_{n+1}C$ forms part of a pushout 
diagram 
\[
\begin{tikzcd}
\bigsqcup \partial\Delta^{n+1} \arrow[d] \arrow[r] & B\cup \sk_n C \arrow[d] \\ 
\bigsqcup \Delta^{n+1} \arrow[r] & B\cup \sk_{n+1}C
\end{tikzcd}
\]
where the coproduct is over the set of non-degenerate $(n+1)$-simplices of 
$C$ which do not belong to $B$.  Hence we 
may apply Lemma~\ref{lem:filler} to conclude that the required diagonal fillers exist.  
\end{proof} 

We give some corollaries.  First we have 

\begin{lemma} 
\label{lem:pushout of inner anodynes}
Suppose given a commutative diagram 
\[
\begin{tikzcd} 
A_1 \arrow[d,"f_1"'] & \arrow[l,"u"'] \arrow[d,"f_2"'] A_2 \arrow[r] & A_3 \arrow[d,"f_3"] \\ 
B_1 & \arrow[l,"v"] B_2 \arrow[r] & B_3 
\end{tikzcd}
\]
of simplicial sets where $f_1$, $f_2$, $f_3$ are 
inner anodyne and $u$, $v$ are monomorphisms.  If the left hand square is a pullback, then the induced map 
\[
A_1\cup_{A_2} A_3 \to B_1\cup_{B_2} B_3 
\]
is inner anodyne.  
\end{lemma}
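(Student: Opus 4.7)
The plan is to exploit closure of inner anodyne maps under pushout, together with Theorem~\ref{thm:E}, to factor the map $A_1 \cup_{A_2} A_3 \to B_1 \cup_{B_2} B_3$ as a composite of two pushouts of inner anodyne maps. Recall that inner anodyne maps are in particular monomorphisms, since they lie in the saturation of the inner horn inclusions.

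The first and critical step is to prove that the canonical map $P \to B_1$, where $P := A_1 \cup_{A_2} B_2$, is inner anodyne. The inclusion $A_1 \to P$ is a pushout of the inner anodyne map $f_2$, hence itself inner anodyne, and composes with $P \to B_1$ to give $f_1$. If we can show $P \to B_1$ is a monomorphism, then Theorem~\ref{thm:E} immediately yields that $P \to B_1$ is inner anodyne. The monomorphism property is precisely where the pullback hypothesis enters: in the topos $\sSet$, whenever a commutative square of monomorphisms is a pullback, the induced map from the pushout to the common codomain realises the union of the two subobjects and is itself a monomorphism. Since $u$, $v$, $f_1$, $f_2$ are all monomorphisms and the left square is a pullback by assumption, this applies here.

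Next I would form the pushout $M := A_1 \cup_{A_2} B_3$, whose structure map $A_2 \to B_3$ is the common value of the two composites $A_2 \to A_3 \to B_3$ and $A_2 \to B_2 \to B_3$. Rearranging colimits gives two useful identifications: $M \cong (A_1 \cup_{A_2} A_3) \cup_{A_3} B_3$ and $M \cong P \cup_{B_2} B_3$. From the first, the map $A_1 \cup_{A_2} A_3 \to M$ is a pushout of $f_3$, hence inner anodyne. From the second, a routine check via universal properties identifies $B_1 \cup_{B_2} B_3$ with the pushout $B_1 \cup_P M$, so the map $M \to B_1 \cup_{B_2} B_3$ is a pushout of the inner anodyne map $P \to B_1$ constructed in the previous step, and is therefore itself inner anodyne. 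Composing the two gives the desired inner anodyne map.

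The main obstacle is the verification that $P \to B_1$ is a monomorphism, since that is the only place where the pullback hypothesis is used in an essential way and it is what unlocks the application of Theorem~\ref{thm:E}. Everything else is a formal manipulation of iterated pushouts combined with the standard saturation properties of the class of inner anodyne maps.
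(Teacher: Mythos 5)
Your proposal is correct and follows essentially the same route as the paper: factor the map through $A_1\cup_{A_2}B_3\cong (A_1\cup_{A_2}A_3)\cup_{A_3}B_3\cong (A_1\cup_{A_2}B_2)\cup_{B_2}B_3$, identify the first leg as a pushout of $f_3$ and the second as a pushout of $A_1\cup_{A_2}B_2\to B_1$, which is inner anodyne by Theorem~\ref{thm:E}. Your explicit verification that $A_1\cup_{A_2}B_2\to B_1$ is a monomorphism (the union of subobjects, via the pullback hypothesis) is exactly the point the paper leaves implicit, and it is the right justification for invoking right cancellation.
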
 

\begin{proof} 
The induced map factors as 
\[
A_1\cup_{A_2} A_3 \to A_1\cup_{A_2}B_3 \simeq A_1\cup_{A_2}B_2\cup_{B_2} B_3 \to B_1\cup_{B_2}B_3.   
\]
The first map in this composite is inner anodyne since it is the 
pushout of an inner anodyne map.  The induced map $A_1\cup B_2 = A_1\cup_{A_2}B_2\to B_1$ is 
inner anodyne by Theorem~\ref{thm:E}.  
It follows that the second map in the composite above 
is also the pushout of an inner anodyne map.  
\end{proof} 

\begin{corollary} 
\label{corr:u diamond C inner anodyne}
Suppose that $u\colon A\to B$ is an inner anodyne map in $\sSet$.  Then 
the induced map $u\diamond C\colon A\diamond C\to B\diamond C$ 
is inner anodyne for any simplicial set $C$.  
\end{corollary}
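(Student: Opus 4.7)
The plan is to factor $A\diamond C$ as an iterated pushout through a mapping cylinder and then apply Lemma~\ref{lem:pushout of inner anodynes} twice.  Write $M_A$ for the mapping cylinder $A \cup_{\set{0}\times A\times C} (\Delta^1\times A\times C)$ of the projection $A\times C \to A$, attached via $\iota_0$ on the other side; one checks directly from the definitions that $A\diamond C$ is naturally isomorphic to the further pushout $M_A \cup_{\set{1}\times A\times C} C$, where $\set{1}\times A\times C \hookrightarrow M_A$ is the $\iota_1$-end of the cylinder and $\set{1}\times A\times C \to C$ is the projection onto $C$.  Define $M_B$ and $B\diamond C$ analogously.

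A key preliminary observation is that, for every simplicial set $X$, the map $u\times \mathrm{id}_X \colon A\times X \to B\times X$ is inner anodyne.  This is because $u\times \mathrm{id}_X$ coincides with the pushout-product of the inner anodyne map $u$ with the monomorphism $\emptyset \to X$, and by Corollary 2.3.2.4 of \cite{HTT} the pushout-product of an inner anodyne map and a monomorphism is inner anodyne.  In particular $u\times \mathrm{id}_C$ and $\mathrm{id}_{\Delta^1}\times u \times \mathrm{id}_C$ are inner anodyne.

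For the first application of Lemma~\ref{lem:pushout of inner anodynes}, I would use the diagram whose top row is $\Delta^1\times A\times C \xleftarrow{\iota_0} \set{0}\times A\times C \to A$, whose bottom row is the analogous span for $B$, and whose three vertical arrows are $\mathrm{id}\times u\times \mathrm{id}$, $u\times \mathrm{id}$, and $u$ respectively.  The verticals are inner anodyne by the preliminary observation, the map $\iota_0$ is a monomorphism, and the left square is a pullback because the preimage of $\set{0}\times B\times C$ under $\mathrm{id}\times u\times \mathrm{id}$ is precisely $\set{0}\times A\times C$.  The lemma then produces that $M_A \to M_B$ is inner anodyne.

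For the second application I would take the diagram with top row $M_A \xleftarrow{\iota_1} \set{1}\times A\times C \to C$, the analogous span for $B$ on the bottom, and verticals $M_A \to M_B$, $u\times \mathrm{id}$, and $\mathrm{id}_C$.  All three are inner anodyne, and $\iota_1$ is a monomorphism.  Since $u$ is a monomorphism, a short check shows that $M_A \to M_B$ is also a monomorphism and that $M_A \cap (\set{1}\times B\times C) = \set{1}\times A\times C$ inside $M_B$, which gives the required pullback condition.  Applying Lemma~\ref{lem:pushout of inner anodynes} then yields that $u\diamond C \colon A\diamond C \to B\diamond C$ is inner anodyne.  The main technical point is the bookkeeping for the two pullback verifications and the monomorphism claim; once those are in place the corollary is a direct consequence of the previous lemma.
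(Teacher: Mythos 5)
Your proposal is correct and is precisely the argument the paper intends: the paper's proof is the one-line remark that the corollary "follows from two applications of Lemma~\ref{lem:pushout of inner anodynes} and the definition of the wide join," and your decomposition of $A\diamond C$ into the two successive pushouts along the $\{0\}$- and $\{1\}$-ends of $\partial\Delta^1\times A\times C$, together with the pushout-product observation that $u\times\mathrm{id}_X$ is inner anodyne, is exactly the detail being elided. The pullback and monomorphism verifications you flag do go through as you describe, so nothing is missing.
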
 

\begin{proof} 
This follows from two applications of Lemma~\ref{lem:pushout of inner anodynes} and the 
definition of the wide join.  
\end{proof} 

\section{Stability of inner fibrations} 
\label{sec:stability} 

In this section we prove Theorems~\ref{thm:B},~\ref{thm:C} and~\ref{thm:D}.  
We begin with the following observation: if $A\subset C$ and $B\subset D$ are subcomplexes, then $A\diamond B$ is naturally a 
subcomplex of $C\diamond D$ --- this follows from the description of the set of $n$-simplices 
of the wide join given in the Introduction.  

Next, we prove the following proposition.    

\begin{proposition} 
\label{prop:wide join of inner anodynes}
Suppose that $u\colon A\to B$ and $v\colon C\to D$ are monomorphisms of simplicial sets.  
If $u$ is inner anodyne then so is the canonical map 
\[
u\diamond' v\colon A\diamond D\cup B\diamond C \to B\diamond D.  
\]
\end{proposition}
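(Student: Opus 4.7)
The plan is to exhibit $u\diamond' v$ as the second leg of a two-step factorization whose first leg and whose full composite are both inner anodyne, and then to invoke the right cancellation property of Theorem~\ref{thm:E}. The only new ingredient needed is a set-theoretic identity: since $A\subseteq B$ and $C\subseteq D$ are subcomplex inclusions, I would first check that $A\diamond C$, $A\diamond D$, and $B\diamond C$ are subcomplexes of $B\diamond D$ satisfying
\[
A\diamond D \cap B\diamond C = A\diamond C.
\]
This is immediate from the explicit simplex-level description of the wide join given in the Introduction: an $n$-simplex of $B\diamond D$ lies in both $A\diamond D$ and $B\diamond C$ precisely when its $B$-component lies in $A$ and its $D$-component lies in $C$.

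Consequently the square
\[
\begin{tikzcd}
A\diamond C \arrow[r, "u\diamond C"] \arrow[d, hookrightarrow] & B\diamond C \arrow[d, hookrightarrow] \\
A\diamond D \arrow[r] & A\diamond D\cup B\diamond C
\end{tikzcd}
\]
is a pushout of simplicial sets, and Corollary~\ref{corr:u diamond C inner anodyne} tells me that the top map $u\diamond C$ is inner anodyne; hence so is its pushout $A\diamond D \to A\diamond D\cup B\diamond C$. On the other hand, the composite
\[
A\diamond D \longrightarrow A\diamond D\cup B\diamond C \xrightarrow{\ u\diamond' v\ } B\diamond D
\]
is precisely $u\diamond D$, which is again inner anodyne by a second application of Corollary~\ref{corr:u diamond C inner anodyne}. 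Theorem~\ref{thm:E} (right cancellation) then delivers the desired conclusion that $u\diamond' v$ is itself inner anodyne.

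I do not anticipate any serious obstacle: once the intersection identity is verified, the argument is a purely formal consequence of Theorem~\ref{thm:E} and Corollary~\ref{corr:u diamond C inner anodyne}, both of which have already been established in Section~\ref{sec:inner}. The reason this proof can be so short is precisely that the difficult work --- namely the right cancellation property for inner anodynes --- has been absorbed into Theorem~\ref{thm:E}.
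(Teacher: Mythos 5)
Your argument is correct and is essentially the paper's own proof: both exhibit $A\diamond D\to A\diamond D\cup B\diamond C$ as a pushout of $u\diamond C$, identify the composite into $B\diamond D$ with $u\diamond D$ (both inner anodyne by Corollary~\ref{corr:u diamond C inner anodyne}), and conclude by right cancellation (Theorem~\ref{thm:E}). The only difference is that you spell out the intersection identity $A\diamond D\cap B\diamond C=A\diamond C$ justifying the pushout, which the paper leaves implicit.
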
 

\begin{proof}
We have a commutative diagram 
\[
\begin{tikzcd} 
A\diamond C \arrow[d] \arrow[r] & A\diamond D \arrow[d] \\ 
B\diamond C \arrow[r] & B\diamond D
\end{tikzcd} 
\]
in which the vertical maps are inner anodyne by Corollary~\ref{corr:u diamond C inner anodyne}.   
It follows that the canonical map 
\[
A\diamond D \to A\diamond D\cup B\diamond C 
\]
is inner anodyne, since it is the pushout of an inner anodyne map.  Now 
the composite map 
\[
 A\diamond D \to A\diamond D\cup B\diamond C \to B\diamond D 
\]
is inner anodyne, and so is the left hand map.  Therefore the right hand map, i.e.\ the map $u\diamond' v$, 
is inner anodyne by Theorem~\ref{thm:E}.
\end{proof} 

Our aim now is to prove Theorem~\ref{thm:C}.

\begin{proof}[Proof of Theorem~\ref{thm:C}] 
We make a series of reductions to progressively simpler cases.  We show first it 
suffices to prove the proposition in the special case when $A = \emptyset$.  
Assuming the truth of the proposition in this case for the moment, we see that in the commutative diagram 
\[
\begin{tikzcd} 
A\diamond C\cup_C D \arrow[r] \arrow[d] & B\diamond C\cup_C D \arrow[d] \\ 
A\diamond D \arrow[r] & B\diamond D. 
\end{tikzcd}
\]
the vertical maps are inner anodyne.  The composite map 
\[
B\diamond C\cup_C D \to B\diamond C \cup A\diamond D\to B\diamond D
\]
is inner anodyne, and the left hand map in this composite is inner anodyne since 
it is a pushout of an inner anodyne map.  Therefore, by the right cancellation 
property (Theorem~\ref{thm:E}) of inner anodyne maps it follows that $u\diamond' v$ is also inner anodyne.  

Assume now that $A = \emptyset$.  Since the class of inner anodyne maps is weakly saturated, and the functors $(-)\diamond C\colon 
\sSet\to (\sSet)_{C/}$ and $(-)\diamond D\colon \sSet\to (\sSet)_{D/}$ are compatible 
with colimits (see Section~\ref{sec:intro}), a standard argument using the skeletal filtration of 
$B$ and Lemma~\ref{lem:pushout of inner anodynes} shows that we may reduce to the 
special case in which $u$ is the inclusion $\emptyset\subset  \Delta^n$ for some $n\geq 0$.  If $n\geq 2$ let $I_n\subset \Delta^n$ 
denote the inclusion of the $n$-chain.  In the commutative diagram of monomorphisms  
\[
\begin{tikzcd}
I_n\diamond C\cup_C D \arrow[r] \arrow[d] &  \Delta^n\diamond C\cup_C D\arrow[d] \\ 
I_n\diamond D \arrow[r] &  \Delta^n\diamond D 
\end{tikzcd}
\]
the horizontal maps are inner anodyne by Corollary~\ref{corr:u diamond C inner anodyne}. 
Therefore, by the right 
cancellation property of inner anodyne maps (Theorem~\ref{thm:E}) it suffices to prove 
that the left hand vertical map is inner anodyne.  Therefore we are reduced (by Lemma~\ref{lem:pushout of inner anodynes}) to the special 
case in which $u$ is the inclusion $\emptyset\subset   \Delta^0$ or $\emptyset\subset   \Delta^1$.    
In the case where $u$ is the inclusion $\emptyset\subset   \Delta^1$, the map 
\[
\Delta^1\diamond C\cup_C D \to \Delta^1\diamond D 
\]
is isomorphic to the map 
\[
D\cup_{D\times \Delta^1} \left(\left(\Delta^0\diamond C\cup_C D\right) \times \Delta^1\right) \to 
D\cup_{D\times \Delta^1} \left(\left(\Delta^0\diamond D\right)\times \Delta^1\right).
\]
Since the pushout of an inner anodyne map is inner anodyne, we are finally reduced to the special 
case in which $u$ is the inclusion $\emptyset\subset   \Delta^0$.  

Let $\cA$ be the class of monomorphisms $v\colon C\to D$ in $\sSet$ for which 
$u\diamond' v$ is inner anodyne.  To complete the proof of the proposition 
it suffices to show that $\cA$ contains every left anodyne map.  Clearly $\cA$ is saturated, and satisfies the 
right cancellation property by Theorem~\ref{thm:E}.  
Therefore, by Proposition~\ref{prop:contains left anodynes}
it suffices to check that $\cA$ contains the initial vertex maps $\Delta^{\set{0}}\to \Delta^n$ for 
all $n\geq 0$.  If $v$ is the initial vertex map $\Delta^{\set{0}}\to \Delta^n$, then 
$u\diamond' v$ is isomorphic to the canonical inclusion  
\[
i\colon (\Delta^{1}\times \set{0})\cup (\set{1}\times \Delta^n)  \hookrightarrow
\Delta^0\diamond \Delta^n. 
\]
The simplicial set $\Delta^0\diamond \Delta^n = (\Delta^1\times\Delta^n )\cup_{\set{0}\times \Delta^n}\Delta^{0}$ 
is the union of the simplicial subsets $S_0 = \sigma_0$, $S_1$, \ldots, $S_n$ where 
\[
S_1 = \sigma_1\cup_{\Delta^{\set{0,1}}}\Delta^0,\  
S_2 = \sigma_2\cup_{\Delta^{\set{0,1,2}}}\Delta^0,\ldots, 
S_n = \sigma_n\cup_{\Delta^{\set{0,\ldots,n}}}\Delta^0,  
\]
and where the $\sigma_i$ are the non-degenerate $(n+1)$-simplices of $\Delta^1\times \Delta^n$ 
described in Lemma~\ref{lem:filler} above.  Let us write $T_0 = S_0$ and $T_i = T_{i-1}\cup S_i$ for 
$i\geq 1$.  Then we have a sequence of inclusions 
\[
T_0 \subset T_1\subset \cdots \subset T_n = \Delta^0\diamond \Delta^n. 
\]

The map $i$ above factors as 
\[
(\Delta^{1}\times \set{0})\cup (\set{1}\times \Delta^n)  \hookrightarrow T_0 \hookrightarrow 
T_n = \Delta^0\diamond \Delta^n  
\]
and the inclusion $(\Delta^{1}\times \set{0})\cup (\set{1}\times \Delta^n)  \hookrightarrow T_0$ 
is clearly inner anodyne.  To complete the proof we need to show that $T_0\subset T_n$ 
is inner anodyne.  Clearly it suffices to prove that $T_{i-1}\subset T_i$ is inner 
anodyne for $i=1,\ldots,n$.  For each $i\geq 1$ we have a pushout diagram 
\[
\begin{tikzcd} 
S_i\cap T_{i-1} \arrow[d] \arrow[r] & T_{i-1} \arrow[d] \\ 
S_i \arrow[r] & T_i 
\end{tikzcd} 
\]
with 
\[
S_i\cap T_{i-1} = \Delta^{\set{0,\ldots,i-1,i',\ldots,n'}}\cup_{\Delta^{\set{0,\ldots,i-1}}} \Delta^0.  
\]
It suffices to prove that the map $S_i\cap T_{i-1}\to S_i$ is inner anodyne for each 
$i\geq 1$.  This can be done using Theorem~\ref{thm:E} 
and the fact that the maps 
\[
\Delta^{\set{0,\ldots,i-1}}\cup \Delta^{\set{i-1,i',\ldots,n'}} \to \Delta^{\set{0,\ldots,i-1,i',\ldots,n'}} 
\]
and 
\[
\Delta^{\set{0,\ldots,i}}\cup \Delta^{\set{i,i',\ldots,n'}}\to \Delta^{\set{0,\ldots,i,i',\ldots,n'}} 
\]
are inner anodyne.  

\end{proof} 

We now prove Theorem~\ref{thm:B}.  

\begin{proof}[Proof of Theorem~\ref{thm:B}] 
By adjointness, it suffices to show that $p$ has the right lifting property against 
every map of the form 
\[
u\diamond' v\colon B\diamond C \cup A\diamond D \to B\diamond D 
\]
where $v\colon C\to D$ is left anodyne.  But $u\diamond' v$ is inner  
anodyne by Theorem~\ref{thm:C}.  The case where 
$u$ is inner anodyne follows from Proposition~\ref{prop:wide join of inner anodynes}.  
\end{proof}

The remainder of the paper is devoted to the proof of Theorem~\ref{thm:D}.  
Let us say that an edge $u\colon \Delta^1\to X$ is {\em wide} $p$-cocartesian 
if the induced map $X^{u/}\to X^{x/}\times_{S^{p(x)/}}S^{p(u)/}$ is a trivial 
Kan fibration.  Thus to prove Theorem~\ref{thm:D} we must prove that 
an edge of $X$ is $p$-cocartesian if and only if it is wide $p$-cocartesian.  

Suppose first that $u\colon \Delta^1\to X$ is a wide $p$-cocartesian edge.  We need to 
show that $u$ is $p$-cocartesian.  Therefore we need to show that in any commutative diagram of the form 
\[
\begin{tikzcd} 
\Delta^{\set{0,1}} \arrow[d] \arrow[dr,"u"] &                          \\ 
\Lambda^{n+2}_0 \arrow[r] \arrow[d] & X \arrow[d,"p"] \\ 
\Delta^{n+2} \arrow[r] \arrow[ur,dashed] & S
\end{tikzcd}
\]
where $n\geq 0$
the indicated map exists making the diagram commute.  
Since $u$ is wide $p$-cocartesian we may find the indicated diagonal filler 
in the diagram 
\[
\begin{tikzcd} 
\Delta^1 \diamond \partial\Delta^n \cup \set{0}\diamond \Delta^n \arrow[d] \arrow[r] & 
\Lambda^{n+2}_0 \arrow[d] \arrow[r] & X \arrow[d,"p"] \\ 
\Delta^1\diamond \Delta^{n} \arrow[urr,dashed] \arrow[r] & \Delta^{n+2} \arrow[r] & S. 
\end{tikzcd}
\]
We claim that the map $j\colon \Delta^1 \diamond \partial\Delta^n \cup \set{0}\diamond \Delta^n\to \Lambda^{n+2}_0$ 
is a categorical equivalence.  Proposition 4.2.1.2 of \cite{HTT} implies that 
the maps $\Delta^1 \diamond \partial\Delta^n \to \Delta^1 \star \partial\Delta^n$ 
and $\set{0}\diamond \Delta^n\to \set{0}\star \Delta^n$  
are categorical equivalences.  Therefore, since the Joyal model structure is left proper, it suffices to prove that the canonical map 
\[
\Delta^1 \diamond \partial\Delta^n \cup_{\set{0}\diamond \partial\Delta^n} \set{0}\star \partial\Delta^n 
\to \Delta^1\star \partial\Delta^n 
\]
is a categorical equivalence.  This map forms part of the composite map 
\[
\Delta^1 \diamond \partial\Delta^n \to 
\Delta^1 \diamond \partial\Delta^n \cup_{\set{0}\diamond \partial\Delta^n} \set{0}\star \partial\Delta^n 
\to \Delta^1\star \partial\Delta^n 
\]
and hence it suffices to prove that the first map in this composite  
is a categorical equivalence.  But this map is a pushout of the categorical equivalence $\set{0}\diamond 
 \partial\Delta^n\to \set{0} \star \partial\Delta^n$ along the inclusion 
$\set{0}\diamond \partial\Delta^n\to \Delta^1\diamond \partial\Delta^n$ 
and hence is a categorical equivalence since the Joyal model structure is left proper.  

It follows that we may factor the map $j$ as $j = qi$ where 
$i\colon \Delta^1 \diamond \partial\Delta^n \cup \set{0}\diamond \Delta^n\to A$  
is an ayclic cofibration in the Joyal model structure and $q\colon A\to \Lambda^{n+2}_0$ is a trivial 
Kan fibration.  Let 
\[
\begin{tikzcd}
\Delta^1 \diamond \partial\Delta^n \cup \set{0}\diamond \Delta^n \arrow[r,"i"] \arrow[d] & A \arrow[d] \\ 
\Delta^1\diamond \Delta^n \arrow[r] & B
\end{tikzcd} 
\]
denote the pushout diagram and note that in the 
induced diagram 
\[
\begin{tikzcd} 
A\arrow[r,"q"] \arrow[d] & \Lambda^{n+2}_0 \arrow[d] \\ 
B \arrow[r] & \Delta^{n+2} 
\end{tikzcd} 
\]
the map $B\to \Delta^{n+2}$ is a categorical equivalence by Proposition 4.2.1.2 of 
\cite{HTT} and the fact that the canonical map $\Delta^1\diamond \Delta^n\to B$ in the 
diagram above is an acyclic cofibration.  We have another commutative diagram 
\[
\begin{tikzcd}
A \arrow[d] \arrow[r] & \Lambda^{n+2}_0 \arrow[r] & X \arrow[d,"p"] \\ 
 B \arrow[urr] \arrow[r] & \Delta^{n+2} \arrow[r] & S
\end{tikzcd}
\]
where the map $B\to X$ is the canonical map from the pushout.  Factor the map $B\to \Delta^{n+2}$ as $B\to B'\to \Delta^{n+2}$, 
where $B\to B'$ is inner anodyne and $B'\to \Delta^{n+2}$ is an inner fibration (and hence a 
categorical fibration).  It follows that $B'\to \Delta^{n+2}$ is a trivial Kan fibration.  
Since $B\to B'$ is inner anodyne and $p\colon X\to S$ is an inner fibration, we may 
find a map $B'\to X$ forming part of a commutative diagram 
\[
\begin{tikzcd}
A \arrow[d] \arrow[r] & \Lambda^{n+2}_0 \arrow[r]  & X \arrow[d,"p"] \\ 
B'\arrow[urr] \arrow[r] & \Delta^{n+2} \arrow[r] & S 
\end{tikzcd} 
\]
To complete the proof that the edge $u$ is $p$-cocartesian it suffices to prove that 
$\Lambda^{n+2}_0\to \Delta^{n+2}$ is a retract of $A\to B'$.  Since $A\to \Lambda^{n+2}_0$ is a 
trivial Kan fibration we may choose a section $\Lambda^{n+2}_0\to A$.  Similarly, since $B'\to 
\Delta^{n+2}$ is a trivial Kan fibration, we may extend the induced map $\Lambda^{n+2}_0 \to B'$ 
along the inclusion $\Lambda^{n+2}_0\to \Delta^{n+2}$ to obtain a section of $B'\to \Delta^{n+2}$, 
which exhibits the desired retraction.  

Suppose now that $u\colon \Delta^1\to X$ is $p$-cocartesian.  We need to show that 
$u$ is wide $p$-cocartesian.  Therefore we need to show that the indicated diagonal filler 
exists in every commutative diagram of the form 
\[
\begin{tikzcd} 
\Delta^1\arrow[d] \arrow[dr,"u"] &                                     \\ 
\Delta^1\diamond \partial \Delta^n\cup \Delta^{\set{0}}\diamond \Delta^n \arrow[d] \arrow[r] & X \arrow[d,"p"] \\ 
\Delta^1\diamond \Delta^n \arrow[r] \arrow[ur,dashed] & S. 
\end{tikzcd} 
\]
A short calculation shows that $\Delta^1\diamond \partial \Delta^n\cup \Delta^{\set{0}}\diamond \Delta^n$ is isomorphic to  
\[
\Delta^1\times A(n+1)\cup_{\partial\Delta^1\times A(n+1)} \Delta^1\sqcup \Delta^n
\]
using the notation from the proof of Lemma~\ref{lem:filler}.  So the lifting problem above may be re-phrased as 
the problem of finding the indicated diagonal filler in the diagram 
\[
\begin{tikzcd}
\Delta^1 \arrow[d] \arrow[dr,"u"] &                                                                       \\ 
\Delta^1\times A(n+1)\cup_{\partial\Delta^1\times A(n+1)} \Delta^1\sqcup \Delta^n \arrow[r] \arrow[d] & X \arrow[d,"p"] \\ 
\Delta^1 \times A(0)\cup_{\partial\Delta^1\times A(0)} \Delta^1 \sqcup \Delta^n  \arrow[ur,dashed] \arrow[r] & S.      
\end{tikzcd}
\]
Since the maps $A(n+1-i)\to A(n-i)$ are inner anodyne for all $i=0,1,\ldots,n-1$ it follows from 
Lemma~\ref{lem:pushout of inner anodynes} that we are reduced to proving that the indicated diagonal filler 
exists in the diagram 
\[
\begin{tikzcd}
\Delta^1 \arrow[d] \arrow[dr,"u"] &                                                                       \\ 
\Delta^1\times A(1)\cup_{\partial\Delta^1\times A(1)} \Delta^1\sqcup \Delta^n \arrow[r] \arrow[d] & X \arrow[d,"p"] \\ 
\Delta^1 \times A(0)\cup_{\partial\Delta^1\times A(0)} \Delta^1 \sqcup \Delta^n  \arrow[ur,dashed] \arrow[r] & S.      
\end{tikzcd}
\]
Observe that the canonical map $\partial\Delta^1\times A(0)\to \Delta^1\sqcup \Delta^n$ induces a pushout 
diagram 
\[
\begin{tikzcd} 
\Delta^1\times A(1)\cup \partial\Delta^1\times A(0) \arrow[d] \arrow[r] & 
\Delta^1\times A(1)\cup_{\partial\Delta^1\times A(1)}\Delta^1\sqcup \Delta^n \arrow[d] \\ 
\Delta^1\times A(0) \arrow[r] & \Delta^1\times A(0)\cup_{\partial\Delta^1\times A(0)} \Delta^1\sqcup \Delta^n 
\end{tikzcd}
\]
and that the induced map $\set{0}\times \Delta^1\times \set{0}\to X$ is isomorphic to the map 
$u\colon \Delta^1\to X$ (here $\set{0}\times \Delta^1\times \set{0}$ is thought of as a subcomplex of 
$\partial\Delta^1\times A(0)$ in the obvious way).  
Therefore, it suffices to show that the indicated diagonal filler exists 
in every commutative diagram of the form 
\[
\begin{tikzcd} 
\Delta^1\times A(1)\cup \partial\Delta^1\times A(0) \arrow[r] \arrow[d] & X \arrow[d,"p"] \\ 
\Delta^1 \times  A(0)  \arrow[ur,dashed] \arrow[r] & S.      
\end{tikzcd}
\]
in which the induced map $\set{0}\times \Delta^{1}\times \set{0}\to X$ represents a 
$p$-cocartesian edge of $X$.  

Therefore, since the map $A(1)\to A(0)$ is obtained as a pushout of $\Lambda^{n+1}_0\to \Delta^{n+1}$, we are finally 
reduced to proving the following lemma.  

\begin{lemma} 
Let $n\geq 1$ and suppose given a commutative diagram  
\[
\begin{tikzcd}
\Delta^1\times \Lambda^{n}_0\cup \partial\Delta^1\times \Delta^{n} \arrow[r,"u"] \arrow[d] & X \arrow[d,"p"] \\ 
\Delta^1 \times  \Delta^{n}  \arrow[ur,dashed,"f"] \arrow[r] & S.      
\end{tikzcd}
\]
of simplicial sets where $p\colon X\to S$ is an inner fibration.  
If $u|\set{0}\times \Delta^{\set{0,1}}$ is a $p$-cocartesian edge of $X$ then the indicated 
diagonal filler exists.  
\end{lemma}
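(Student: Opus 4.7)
The plan is to decompose the inclusion $Z := \Delta^1 \times \Lambda^n_0 \cup \partial\Delta^1 \times \Delta^n \hookrightarrow \Delta^1 \times \Delta^n$ into a finite sequence of pushouts, each being either (i) a pushout of an inner horn inclusion $\Lambda^m_j \hookrightarrow \Delta^m$ with $0<j<m$, or (ii) a pushout of an outer horn $\Lambda^m_0 \hookrightarrow \Delta^m$ in which the edge $\Delta^{\{0,1\}}$ is the $p$-cocartesian edge $u|_{\{0\}\times\Delta^{\{0,1\}}}$. Diagonal fillers exist at each stage: in case (i) because $p$ is an inner fibration, and in case (ii) by the cocartesian hypothesis. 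Using the notation of Lemma~\ref{lem:filler}, let $\sigma_0,\ldots,\sigma_n$ denote the non-degenerate $(n+1)$-simplices of $\Delta^1\times\Delta^n$. A direct check against the explicit description of $Z$ shows that the faces of $\sigma_k$ not contained in $Z$ are $\{d_1\sigma_0\}$ for $k=0$, $\{d_0\sigma_k,d_k\sigma_k,d_{k+1}\sigma_k\}$ for $1\le k\le n-1$, and $\{d_0\sigma_n,d_n\sigma_n\}$ for $k=n$. The identifications $d_k\sigma_k=d_k\sigma_{k-1}$ and $d_{k+1}\sigma_k=d_{k+1}\sigma_{k+1}$ hold, while the faces $d_0\sigma_k$ are unique to $\sigma_k$. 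Crucially, since the first two vertices of each $\sigma_k$ with $k\ge 1$ (and of each $n$-simplex $d_k\sigma_k$ with $k\ge 2$) are $0$ and $1$, the corresponding $\Delta^{\{0,1\}}$-edge coincides with the cocartesian edge.

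I then build up $\Delta^1\times\Delta^n$ as follows. First attach $\sigma_0$: the intersection $\sigma_0\cap Z$ is the inner horn $\Lambda^{n+1}_1\subset\sigma_0$, so this step is an inner anodyne pushout (using $n\ge 1$). Next, for $k$ decreasing from $n$ down to $2$ (vacuous when $n=1$), attach $\sigma_k$ in two sub-steps. In sub-step (a), attach the $n$-simplex $d_k\sigma_k$: a short argument shows that its intersection with the current stage is the outer horn $\Lambda^n_0$, and a filler exists by the cocartesian property applied in dimension $n$ (this uses $n\ge 2$). In sub-step (b), attach $\sigma_k$: the face $d_{k+1}\sigma_k=d_{k+1}\sigma_{k+1}$ is already present from the previous iteration and $d_k\sigma_k$ was just added, so the intersection with the current stage is $\Lambda^{n+1}_0$, which is filled by cocartesianness in dimension $n+1$. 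Finally, attach $\sigma_1$: now $d_1\sigma_1=d_1\sigma_0$ (via $\sigma_0$) and $d_2\sigma_1=d_2\sigma_2$ (via $\sigma_2$, when $n\ge 2$; when $n=1$ the face $d_2\sigma_1$ automatically lies in $Z$) are present, leaving only $d_0\sigma_1$ absent, and a final cocartesian $\Lambda^{n+1}_0$-fill completes the diagram. Compatibility of each fill with the given map to $S$ is automatic from the commutative square.

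The main obstacle is the combinatorial verification in sub-step (a) that $d_k\sigma_k\cap(\text{current stage})=\Lambda^n_0$, with neither more nor less missing than claimed. The two potentially missing simplices are the top $n$-simplex $d_k\sigma_k$ and the $(n-1)$-face $d_0d_k\sigma_k$. Neither lies in $Z$: their projections to $\Delta^n$ have images $\{0,\ldots,n\}$ and $\{1,\ldots,n\}$ respectively, and neither of these subsets lies in $\Lambda^n_0$. Neither is a face of any previously attached $(n+1)$-simplex either, because the vertex $k'$ appears only in $\sigma_j$ for $j\le k$, while the vertex $1$ is absent from $\sigma_0$. All smaller simplices of $d_k\sigma_k$ either contain a vertex forcing their second-coordinate image into $\Lambda^n_0$ or have constant first coordinate, and hence lie in $Z$. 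After sub-step (a), the offending face $d_0d_k\sigma_k$ is itself present as a face of $d_k\sigma_k$, making the sub-step (b) identification $\sigma_k\cap(\text{new stage})=\Lambda^{n+1}_0$ work. These verifications, combined with the analogous but easier ones for the first and last attachments, assemble into the desired diagonal filler.
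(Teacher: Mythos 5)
Your proof is correct, and it shares the paper's basic skeleton --- filter $\Delta^1\times\Delta^n$ by attaching the non-degenerate $(n+1)$-simplices $\sigma_0,\ldots,\sigma_n$ one at a time, inserting an auxiliary $n$-dimensional face whenever two faces of $\sigma_k$ are missing from the current stage --- but the execution is genuinely different. The paper attaches the $\sigma_k$ in \emph{increasing} order; then for $1\le k\le n-1$ the shared face $\partial_k\sigma_k=\partial_k\sigma_{k-1}$ is already present, and the two missing faces $\partial_0\sigma_k$ and $\partial_{k+1}\sigma_k$ are supplied by an \emph{inner} horn $\Lambda^n_k\subset\Delta^n$ followed by an \emph{inner} horn $\Lambda^{n+1}_{k+1}\subset\Delta^{n+1}$, so the cocartesian hypothesis is invoked exactly once, for the single outer horn $\Lambda^{n+1}_0$ arising at the last simplex $\sigma_n$. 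You attach in \emph{decreasing} order after $\sigma_0$, which forces the shared face $\partial_k\sigma_k$ to be supplied by an outer horn $\Lambda^n_0$ and then $\sigma_k$ itself by an outer horn $\Lambda^{n+1}_0$; you therefore invoke the cocartesian property $2n-1$ times and in two different dimensions. Your combinatorial verifications --- which faces of each $\sigma_k$ lie in the initial subcomplex, the identifications $\partial_k\sigma_k=\partial_k\sigma_{k-1}$ and $\partial_{k+1}\sigma_k=\partial_{k+1}\sigma_{k+1}$, and the fact that the edge $\Delta^{\{0,1\}}$ of each outer horn is the given cocartesian edge --- are all accurate, so the argument goes through, including the degenerate case $n=1$ where the middle loop is empty. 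What the paper's ordering buys is a sharper accounting of where cocartesianness is actually needed (one horn filler, in the top dimension only, with everything else following from $p$ being an inner fibration); yours is more uniform in flavour but uses the full strength of the cocartesian condition in dimensions $n$ and $n+1$ at every stage.
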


\begin{proof}
We remark first that 
\[
\Delta^1\times \partial_i\Delta^{n} = \partial_{i+1}\sigma_0\cup \cdots \cup \partial_{i+1}\sigma_{i-1}\cup 
\partial_i \sigma_{i+1}\cup \cdots \cup \partial_i \sigma_n.
\]
where $\partial_i\Delta^n$ denotes the $i$-th face of $\Delta^n$ (i.e.\ $\partial_i\Delta^n = 
\Delta^{\set{0,\ldots,\hat{i},\ldots,n}}$), and where $\partial_i\sigma_j$ denotes 
the $i$-th face of the $(n+1)$- simplex $\sigma_j$.  

Write $B(n+1) = \Delta^1\times \Lambda^{n}_0\cup \partial\Delta^1\times \Delta^{n}$, and assuming 
that $B(i+1)$ has been defined, let $B(i)$ equal the union of $B(i+1)$ and the $(n+1)$-simplex $\sigma_{n-i}$, 
together with all of its faces.  Thus $B(0) = \Delta^1\times \Delta^{n}$ and for each $i=0,1,\ldots,n$ we have a pushout diagram 
\[
\begin{tikzcd}
B(i+1)\cap \sigma_{n-i} \arrow[r] \arrow[d] & B(i+1) \arrow[d] \\ 
 \sigma_{n-i} \arrow[r] & B(i). 
 \end{tikzcd} 
\]
We construct the diagonal filler $f$ from the statement of the lemma by descending induction on $n$.  
By the remark above we have an identification 
\[
B(n+1)\cap \sigma_0 = \bigcup_{i\neq 1} \partial_i\sigma_0
\] 
so that the inclusion 
$B(n+1)\cap \sigma_0\subset \sigma_0$ is isomorphic to the inclusion $\Lambda^{n+1}_1\subset \Delta^{n+1}$.  
It follows that the extension $f|B(n)$ exists since $p\colon X\to S$ is an inner fibration.  

Assume that $f|B(i+1)$ has been constructed for $0<i<n$. We have 
\[
B(i+1)\cap \sigma_{n-i} = \bigcup_{j\neq 0,n-i+1}\partial_j \sigma_{n-i}
\]
and 
\[
B(i+1)\cap \partial_{0}\sigma_{n-i} = \bigcup_{j\neq n-i}\partial_j\partial_{0}\sigma_{n-i}. 
\]
Thus the inclusion $B(i+1)\cap \partial_{0}\sigma_{n-i}\subset \partial_{0}\sigma_{n-i}$ is isomorphic to the 
inner anodyne inclusion $\Lambda^{n}_{n-i}\subset \Delta^{n}$.  Since $p$ is an inner fibration, it 
follows that $f|B(i+1)$ extends to a map $f|B(i+1)\cup \partial_{0}\sigma_{n-i}$.  We now have 
\[
(B(i+1)\cup \partial_{0}\sigma_{n-i})\cap \sigma_{n-i} = \bigcup_{j\neq n-i+1}\partial_j \sigma_{n-i}
\]
and it follows that the desired extension $f|B(i)$ exists, using the fact that $p$ is an inner fibration again.  
Finally, we have 
\[
B(1)\cap \sigma_n = \bigcup_{j\neq 0}\partial_j\sigma_n 
\]
and hence the extension $f$ exists because $u|\Delta^{\set{0,1}}$ is $p$-cocartesian.  
\end{proof}

\medskip 

\noindent
{\bf Acknowledgements}: I thank Thomas Nikolaus for some useful conversations.

\end{document}